\newcommand{\T}{\mathcal{T}} 
\newcommand{\Sur}{\cals} 
\newcommand{\M}{M} 
\newcommand{\x}{\mathbf{x}} 
\newcommand{\y}{\mathbf{y}} 
\newcommand{\A}{\mathcal{A}} 
\newcommand{\Aq}{\mathcal{A}_q} 
\newcommand{\PM}{P} 
\newcommand{\Match}{\operatorname{Match}} 
\newcommand{\SG}{\mathcal{G}} 
\newcommand{\Gn}{\SG_n} 
\newcommand{\SH}{\mathcal{H}} 
\newcommand{\Hn}{\SH_n} 
\newcommand{\inv}{\sigma} 
\newcommand{\QTorus}{T} 
\newcommand{\ke}{\textcolor{red}}
\newcommand{\Philipp}{\textcolor{blue}}
\newcommand{\new}{\textcolor{olive}}
\newcommand{\harxiv}[1]{ \href{http://arxiv.org/abs/#1}{\texttt{arXiv:#1}}}
\newcommand{\hyref}[2]{ \hyperref[#2]{#1~\ref*{#2}} }
\newcommand{\coloneqq}{\mathrel{\mathop:}=}
\newcommand{\eqqcolon}{=\mathrel{\mathop:}}
\theoremstyle{definition}
\newtheorem{theorem}{Theorem}[section]
\newtheorem{lemma}[theorem]{Lemma}
\newtheorem{prop}[theorem]{Proposition}
\newtheorem{defn}[theorem]{Definition}
\newtheorem{cor}[theorem]{Corollary}
\newtheorem{question}[theorem]{Question}
\newtheorem{rem}[theorem]{Remark}
\newtheorem{ex}[theorem]{Example}
\newtheorem{notation}[theorem]{Notation}
\newtheorem{thmIntro}{Theorem}    \renewcommand{\thethmIntro}{\Alph{thmIntro}}
\newtheorem{questIntro}{Question}    \renewcommand{\thequestIntro}{\Alph{questIntro}}
    \renewcommand{\theconjIntro}{\Alph{conjIntro}}
\newcommand\rawcrossout[2]{\ensurestackMath{%
  \setbox0=\hbox{$#2$}%
  \crosswd=\wd0\relax%
  \setbox0=\hbox{$#1$}%
  \termwd=\wd0\relax%
  \FPdiv\myscale{\the\termwd}{\the\crosswd}%
  \stackengine{0pt}{#1}{\stretchrel*{\scalebox{\myscale}[1]{#2}}{#1}}{O}{c}{F}{T}{L}}}
\def\XX{\kern-3pt/}
\def\YY{\kern-.5pt}
\newcommand\crossout[1]{\rawcrossout{#1}{\YY/\YY}}
\newcommand\dcrossout[1]{\rawcrossout{#1}{\YY/\XX\YY}}
\newcommand\tcrossout[1]{\rawcrossout{#1}{\YY/\XX\XX\YY}}
\newcommand\qcrossout[1]{\rawcrossout{#1}{\YY/\XX\XX\XX\YY}}
\newcommand{\Canakci}{\c{C}anak\c{c}\i}
\newcommand{\Ilke}{\.{I}lke }
\newcommand{\I}{\.{I}}
\renewcommand*{\theintrotheorem}{\Alph{introtheorem}}
\definecolor{dgreen}{rgb}{0,0.45,0}
\definecolor{beaublue}{rgb}{0.74, 0.83, 0.9}
\definecolor{darklavender}{rgb}{0.45, 0.31, 0.59}
\definecolor{darkorchid}{rgb}{0.6, 0.2, 0.8}
\definecolor{darkpastelpurple}{rgb}{0.59, 0.44, 0.84}
\definecolor{electricviolet}{rgb}{0.56, 0.0, 1.0}
\font\sss=cmss8
\newcommand{\sA}{\mathsf{A}}
\newcommand{\sB}{\mathsf{B}}
\newcommand{\sC}{\mathsf{C}}
\newcommand{\sD}{\mathsf{D}}
\newcommand{\sE}{\mathsf{E}}
\newcommand{\sF}{\mathsf{F}}
\newcommand{\sG}{\mathsf{G}}
\newcommand{\sH}{\mathsf{H}}
\newcommand{\sI}{\mathsf{I}}
\newcommand{\sJ}{\mathsf{J}}
\newcommand{\sK}{\mathsf{K}}
\newcommand{\sL}{\mathsf{L}}
\newcommand{\sM}{\mathsf{M}}
\newcommand{\sN}{\mathsf{N}}
\newcommand{\sO}{\mathsf{O}}
\newcommand{\sP}{\mathsf{P}}
\newcommand{\sQ}{\mathsf{Q}}
\newcommand{\sR}{\mathsf{R}}
\newcommand{\sS}{\mathsf{S}}
\newcommand{\sT}{\mathsf{T}}
\newcommand{\sU}{\mathsf{U}}
\newcommand{\sV}{\mathsf{V}}
\newcommand{\sW}{\mathsf{W}}
\newcommand{\sX}{\mathsf{X}}
\newcommand{\sY}{\mathsf{Y}}
\newcommand{\sZ}{\mathsf{Z}}
\DeclareMathAlphabet{\mathpzc}{OT1}{pzc}{m}{it}
\newcommand{\bA}{\mathbb{A}}
\newcommand{\bB}{\mathbb{B}}
\newcommand{\bC}{\mathbb{C}}
\newcommand{\bD}{\mathbb{D}}
\newcommand{\bE}{\mathbb{E}}
\newcommand{\bF}{\mathbb{F}}
\newcommand{\bG}{\mathbb{G}}
\newcommand{\bH}{\mathbb{H}}
\newcommand{\bI}{\mathbb{I}}
\newcommand{\bJ}{\mathbb{J}}
\newcommand{\bK}{\mathbb{K}}
\newcommand{\bL}{\mathbb{L}}
\newcommand{\bM}{\mathbb{M}}
\newcommand{\bN}{\mathbb{N}}
\newcommand{\bO}{\mathbb{O}}
\newcommand{\bP}{\mathbb{P}}
\newcommand{\bQ}{\mathbb{Q}}
\newcommand{\bR}{\mathbb{R}}
\newcommand{\bS}{\mathbb{S}}
\newcommand{\bT}{\mathbb{T}}
\newcommand{\bU}{\mathbb{U}}
\newcommand{\bV}{\mathbb{V}}
\newcommand{\bW}{\mathbb{W}}
\newcommand{\bX}{\mathbb{X}}
\newcommand{\bY}{\mathbb{Y}}
\newcommand{\bZ}{\mathbb{Z}}
\newcommand{\ga}{\calg_{\gamma}}
\DeclareMathOperator{\match}{\textup{ Match }}
\DeclareMathOperator{\cross}{\textup{cross }}
\DeclareMathOperator{\acc}{\mathrm{Acc }}
\newcommand{\cala}{\mathcal{A}}
\newcommand{\calb}{\mathcal{B}}
\newcommand{\calc}{\mathcal{C}}
\newcommand{\cald}{\mathcal{D}}
\newcommand{\cale}{\mathcal{E}}
\newcommand{\calf}{\mathcal{F}}
\newcommand{\calg}{\mathcal{G}}
\newcommand{\calh}{\mathcal{H}}
\newcommand{\cali}{\mathcal{I}}
\newcommand{\calj}{\mathcal{J}}
\newcommand{\calk}{\mathcal{K}}
\newcommand{\call}{\mathcal{L}}
\newcommand{\calm}{\mathcal{M}}
\newcommand{\caln}{\mathcal{N}}
\newcommand{\calo}{\mathcal{O}}
\newcommand{\calp}{\mathcal{P}}
\newcommand{\calq}{\mathcal{Q}}
\newcommand{\calr}{\mathcal{R}}
\newcommand{\cals}{\mathcal{S}}
\newcommand{\calt}{\mathcal{T}}
\newcommand{\calu}{\mathcal{U}}
\newcommand{\calv}{\mathcal{V}}
\newcommand{\calw}{\mathcal{W}}
\newcommand{\calx}{\mathcal{X}}
\newcommand{\caly}{\mathcal{Y}}
\newcommand{\calz}{\mathcal{Z}}
\newcommand{\tA}{\mathtt{A}}
\newcommand{\tB}{\mathtt{B}}
\newcommand{\tC}{\mathtt{C}}
\newcommand{\tD}{\mathtt{D}}
\newcommand{\tE}{\mathtt{E}}
\newcommand{\tF}{\mathtt{F}}
\newcommand{\tG}{\mathtt{G}}
\newcommand{\tH}{\mathtt{H}}
\newcommand{\tI}{\mathtt{I}}
\newcommand{\tJ}{\mathtt{J}}
\newcommand{\tK}{\mathtt{K}}
\newcommand{\tL}{\mathtt{L}}
\newcommand{\tM}{\mathtt{M}}
\newcommand{\tN}{\mathtt{N}}
\newcommand{\tO}{\mathtt{O}}
\newcommand{\tP}{\mathtt{P}}
\newcommand{\tQ}{\mathtt{Q}}
\newcommand{\tR}{\mathtt{R}}
\newcommand{\tS}{\mathtt{S}}
\newcommand{\tT}{\mathtt{T}}
\newcommand{\tU}{\mathtt{U}}
\newcommand{\tV}{\mathtt{V}}
\newcommand{\tW}{\mathtt{W}}
\newcommand{\tX}{\mathtt{X}}
\newcommand{\tY}{\mathtt{Y}}
\newcommand{\tZ}{\mathtt{Z}}
\newcommand{\ba}{\bar{a}}
\newcommand{\bb}{\bar{b}}
\newcommand{\bc}{\bar{c}}
\newcommand{\bd}{\bar{d}}
\newcommand{\be}{\bar{e}}
\newcommand{\bg}{\bar{g}}
\newcommand{\bh}{\bar{h}}
\newcommand{\bi}{\bar{i}}
\newcommand{\bj}{\bar{j}}
\newcommand{\bk}{\bar{k}}
\newcommand{\bl}{\bar{l}}
\newcommand{\bm}{\bar{m}}
\newcommand{\bn}{\bar{n}}
\newcommand{\bo}{\bar{o}}
\newcommand{\bp}{\bar{p}}
\newcommand{\bq}{\bar{q}}
\newcommand{\br}{\bar{r}}
\newcommand{\bs}{\bar{s}}
\newcommand{\bt}{\bar{t}}
\newcommand{\bu}{\bar{u}}
\newcommand{\bv}{\bar{v}}
\newcommand{\bw}{\bar{w}}
\newcommand{\bx}{\bar{x}}
\newcommand{\by}{\bar{y}}
\newcommand{\bz}{\bar{z}}
\newcommand{\Db}{\sD^b}
\newcommand{\KminusL}{\sK^{b,-}(\proj \Lambda)}
\newcommand{\kk}{{\mathbf{k}}}
\renewcommand{\setminus}{\backslash}
\DeclareMathOperator{\Ex}{\mathrm{Ex}}
\DeclareMathOperator{\coker}{\mathrm{coker}}
\DeclareMathOperator{\kernel}{\mathrm{ker}}
\DeclareMathOperator{\im}{\mathrm{im}}
\DeclareMathOperator{\Hom}{\mathrm{Hom}}
\DeclareMathOperator{\Ext}{\mathrm{Ext}}
\DeclareMathOperator{\Tw}{\mathrm{Twist}}
\renewcommand{\mod}[1]{\mathsf{mod}(#1)}
\newcommand{\proj}[1]{\mathsf{proj}(#1)}
\newcommand{\too}{\longrightarrow}
\newcommand{\rightlabel}[1]{\stackrel{#1}{\longrightarrow}}
\DeclareMathOperator{\bad}{\mathrm{Bad }}
\newcommand{\xydot}{{\bullet}}
\newcommand{\arr}{\ar@{-}[r]}
\newcommand{\smxy}[1]{{\text{\tiny$#1$}}}
\newcommand{\mun}{\mu^{(n)}}
\newcommand{\muone}{\mu^{(1)}}
\newcommand{\muze}{\mu^{(0)}}
\newcommand{\muinf}{\mu^{\infty}}
\newcommand{\mubul}{\mu^{\bullet}}
\renewcommand{\phi}{\varphi}
\renewcommand{\epsilon}{\varepsilon}
\newcounter{sarrow}
\newcommand\xrsquigarrow[1]{%
	\stepcounter{sarrow}%
	\begin{tikzpicture}[decoration=snake]
	\node (\thesarrow) {\strut#1};
	\draw[->,decorate] (\thesarrow.south west) -- (\thesarrow.south east);
	\end{tikzpicture}%
}
\tikzset{join/.code=\tikzset{after node path={%
			\ifx\tikzchainprevious\pgfutil@empty\else(\tikzchainprevious)%
			edge[every join]#1(\tikzchaincurrent)\fi}}}
\tikzset{>=stealth',every on chain/.append style={join},
	every join/.style={->}}
\tikzset{vertex/.style={circle,fill=black,inner sep=1pt,outer sep=2pt},
	tinyvertex/.style={font=\scriptsize,minimum size=6pt},
	smallvertex/.style={inner sep=1pt, font=\small},
	>=stealth',
	leadsto/.style={-angle 90,decorate,decoration=snake,very thick},
	cut/.style={decorate,decoration=saw,very thick}}
\tikzset{
	partial ellipse/.style args={#1:#2:#3}{
		insert path={+ (#1:#3) arc (#1:#2:#3)}
	}
}
\begin{document}

\title[An expansion formula for type $A$ and Kronecker quantum cluster algebras]{An expansion formula for type $A$ and Kronecker quantum cluster algebras}
\keywords{Quantum cluster algebras, non-commutative rings, surface cluster algebras, triangulations, arcs, snake graphs, perfect matchings, lattices, Stembridge phenomenon, mathematical physics}
\thanks{The first author was supported by EPSRC grant EP/P016014/1 and the second author was supported by EPSRC grants EP/N005457/1 and EP/M004333/1.}

\author{\Ilke  \Canakci}
\address{School of Mathematics, Statistics and Physics, Newcastle University, Newcastle Upon Tyne NE1 7RU, United Kingdom}
\email{ilke.canakci@newcastle.ac.uk}

\author{Philipp Lampe}
\address{School of Mathematics, Statistics and Actuarial Science (SMSAS), Canterbury CT2 7FS, United Kingdom
}
\email{P.B.Lampe@kent.ac.uk}

\begin{abstract} We introduce an expansion formula for elements in quantum cluster algebras associated to type $A$ and Kronecker quivers with principal quantization. Our formula is parametrized by perfect matchings of snake graphs as in the classical case. In the Kronecker type, the coefficients are $q$-powers whose exponents are given by a weight function induced by the lattice of perfect matchings. As an application, we prove that a reflectional symmetry on the set of perfect matchings satisfies Stembridge's $q=-1$ phenomenon with respect to the weight function. Furthermore, we discuss a relation of our expansion formula to generating functions of BPS states. 
\end{abstract}

\maketitle

{\small
\setcounter{tocdepth}{1}
\tableofcontents
}
\section{Introduction}

Cluster algebras were introduced by Fomin--Zelevinsky \cite{FZ,FZ2,FZ4} and by Berenstein--Fomin--Zelevinsky \cite{BFZ} in a series of four articles to give an algebraic framework for the study of total positivity and dual canonical bases in Lie theory. Cluster algebras occur naturally in many areas of mathematics such as representation theory, geometry, combinatorics and mathematical physics.

An important class of cluster algebras, namely \emph{surface cluster algebras}, was introduced by Fomin--Shapiro--Thurston~\cite{FST} using combinatorics of (oriented) surfaces with marked points. Subsequently, Fomin--Thurston~\cite{FT} gave an intrinsic formulation of surface cluster algebras, building on earlier works of Gekhtman--Shapiro--Vainshtein \cite{GSV}, Fock--Goncharov \cite{FG1,FG2} and Penner \cite{P}, considering hyperbolic structures on the surface. Surface cluster algebras are also important from a classification point of view since all but finitely many cluster algebras of finite mutation type are those associated to marked surfaces or of rank $2$ by Felikson--Shapiro--Tumarkin~\cite{FST2,FST3}.

Quantum deformations of cluster algebras were introduced  by Berenstein--Zelevinsky~\cite{BZ} to develop a setting for a general notion of canonical bases. Their construction also builds on Fock--Goncharov~\cite{FG1,FG2} but is given in a more systematic way and, in particular, they show most structural results of cluster algebras in the quantum setting. 

Combinatorial, geometric and representation theoretic aspects of surface cluster algebras have been studied significantly in the classical case. An expansion formula for elements in surface cluster algebras was given by \cite{MSW} extending on \cite{S,ST,MS} and further combinatorial aspects of surface cluster algebras were explored in \cite{CS1,CS2,CS3,CS4,CLS}. For some results on representation theoretic aspects of surface cluster algebras, see \cite{AP,BZh,CaSc,QZ,ZZZ}. In the quantum setting most of research goes into understanding cluster variables and a `good' basis in representation theoretic terms (see for instance \cite{KQ,Q1,Q2,Q3,Q4,R}), into identifying quantum cluster algebra structures in Lie theory (see \cite{GLS,GY,G,GL}) or into the so-called positivity conjecture which is shown to be true in \cite{D} for all quantum cluster algebras. Representation theoretic expressions for quantum cluster variables were given by \cite{R,R2}; combinatorial expressions for quantum cluster variables were given by \cite{L,L2}; geometric descriptions for quantum cluster variables were given by \cite{M}. Also see \cite{BR} for the non-commutative version of cluster algebras.
  
\bigskip

In the quantum setting, there is not much known solely for surface type quantum cluster algebras. The aim of this article is to take this direction and extend the expansion formula to give explicit Laurent polynomial expressions for quantum cluster variables in the quantum cluster algebra of type $A_n$ and for quantum cluster variables and certain other elements in the quantum cluster algebra of Kronecker type considered with principal quantization. More precisely, for surface type cluster algebras and for their quantum analogues, cluster variables (and quantum cluster variables) are in bijection with (isotopy classes) of arcs in the surface. Furthermore, there is a planar graph, called \emph{snake graph}, associated to every arc in the surface. The explicit formula of ~\cite{MSW} for elements in the cluster algebra is parameterized by perfect matchings of snake graphs. In this article we show that they can also be used to give Laurent polynomial expansions for elements in the quantum cluster algebra. 

To state our main result, we introduce some notation. The quantum deformation of a cluster algebra is obtained by making each cluster into a \emph{quasi-commuting} family of variables. Then the quantum cluster algebra $\cala_q$ is a $\mathbb{Z}[q^{\pm 1/2}]$-subalgebra of the skew-field of fractions of the \emph{based quantum torus} (see Section~\ref{Sec:QCA}). The based quantum torus admits a $\mathbb{Z}[q^{\pm 1/2}]$-basis indexed by $\mathbb{Z}^m$, denoted $\{\, M[a]\mid a\in \mathbb{Z}^m\,\}$.

Let $\cala_q$ be the quantum cluster algebra with principal quantization associated with an adjacency quiver $Q$ of a triangulation of a surface $\cals$ which is either a disc with (finitely) many marked points on the boundary or an annulus with exactly one marked point in each boundary component. 

\begin{thmIntro}[Theorem~\ref{Thm:ExpansionA}, Theorem~\ref{Thm:KroneckerSnakes}] \label{ThmA} With the notation above, let $\gamma$ be a (generalized) arc in $\cals$, $x_{\gamma}$ be the corresponding element in $\cala_q$ and $\calg$ be the snake graph associated with $\gamma$. Then
\[
x_{\gamma} = \sum_{\PM\models\SG} q^{\Omega(P)} M\left[\nu(\PM)\right]
\]
where $\nu(\PM)\in\mathbb{Z}^m$ is a vector associated with a perfect matching $P$ of $\SG$ and where $\Omega(P)=0$ if $\cals$ is a disc and $\Omega(P)$ is a half-integer given explicitly by the position of the perfect matching in the perfect matching lattice if $\cals$ is the annulus with exactly one marked point in each of the boundary components.
\end{thmIntro}

We note that our formula coincides with the classical case \cite{MSW, Pr} in the limit $q \to 1$. For type $A$, the proof of the theorem uses work of Tran \cite{T} on quantum $F$-polynomials.

\bigskip

We then get a curious  application of Theorem A by comparing our expansion formula for self-crossing arcs in the Kronecker type with generating functions of BPS states. Relations between BPS states and cluster theory have been studied for example by \cite{ACCERV,ACCERV2,Ci,CN,GMN,W}. C\'{o}rdova--Neitzke \cite{CN} consider a supersymmetric quantum field theory of quiver type whose line defects $W_n$ are parametrized by natural numbers $n$ and compute the generating  function $F(W_n)$ of BPS states recursively using a Coulomb branch formula for small $n$. 

\begin{questIntro}[Question \ref{Conj:BPS}]
Is the following statement true? For $n\in\mathbb{N}$, the generating function $F(W_n)$ coincides with the element associated to a self-crossing arc in the quantum cluster algebra of Kronecker type.
\end{questIntro}

We answer the question affirmatively for $n\in\{0,1,2,3,4\}$, see Proposition~\ref{prop:BPS}.

\bigskip

In an independent work, Huang~\cite{H} provides an expansion formula for quantum cluster algebras with \emph{boundary} coefficients generalizing \cite{MSW}; here the $q$-powers of the terms are given by recursions in the perfect matching lattice.

\bigskip

If we plug in $q=1$ in Theorem A, we obtain the classical expansion formula. The substitution $q=-1$ is related to Stembridge's $q=-1$ phenomenon. The snake graph $\SG$ associated with a (non-crossing) arc in the annulus with exactly one marked point on each boundary component admits an axis of symmetry. Reflection across this axis induces an involution $\sigma$ on the set of perfect matchings $\operatorname{Match}(\SG)$. Stembridge's $q=-1$ phenomenon \cite{S} asserts that the number of fixed points under this involution is equal to the evaluation of coefficients at $q=-1$. We define a map $w$ as a slight variation of the map $\Omega$. We then get the following result.

\begin{thmIntro}[Corollary \ref{Cor:Stembridge}] 
Let $p,r\in\mathbb{N}$. There exists $X(q)\in\mathbb{Z}[q]$ such that the following conditions hold.
\begin{enumerate}
\item Up to a power of $q^{\pm 1/2}$, the polynomial $X(q)$ is equal to $\sum_{P} q^{\Omega (P)}$ where the sum runs over all $P$ in the graded component $\Match(\SG)_{p,r}$ of the perfect matching lattice. 
\item The value $X(-1)$ is equal to the number of fixed points of the restriction of $\sigma$ to $\Match(\SG)_{p,r}$. 
\end{enumerate}
 In other words, the quadruple $(\Match(\SG)_{p,r},\sigma,X(q),w)$ satisfies Stembridge's $q=-1$ phenomenon.
\end{thmIntro}

\bigskip

The paper is organized as follows. Section 2 is devoted to background on surface cluster algebras, snake graphs and general introduction to quantum cluster algebras. Section 3 concerns with the expansion formula for quantum cluster algebras of type $A$. Section 4 gives the formula for the Kronecker type and links it to BPS states.
Finally in Section 5, we establish a connection to Stembridge's $q=-1$ phenomenon.

\bigskip

\emph{Acknowledgements:} We would like to thank Hugh Thomas for stimulating discussions and  for bringing C\'ordova--Neitzke's paper to our attention. We would like to thank Dylan Allegretti for insightful comments on a previous version of the manuscript. 

\section{Background on (quantum) cluster algebras and snake graphs}
\subsection{Cluster algebras}

Let us recall some basic features of cluster algebras. For more details we refer the reader to the literature.

Let $m\geq n$ be natural numbers. In this article we use the notations $[r]=\{\, k\mid 1\leq k\leq r \,\}$ and $[r,s]=\{\, k\mid r\leq k\leq s\,\}$ for $r,s\in\mathbb{N}$. Moreover, we denote by $I$ the $n\times n$ identity matrix and by $0$ the $n\times n$ zero matrix.

An integer $m\times n$ matrix $B$ is called an \emph{exchange matrix} if the submatrix of $B$ on rows and columns $[n]$ is skew-symmetrizable. We denote by $\Ex (m,n)$ the set of all $m\times n$ exchange matrices. A central notion in cluster theory is the mutation $\mu_k\colon \Ex(m,n) \to \Ex(m,n)$ for $k\in[n]$, defined by $B=(b_{ij})\mapsto \mu_k(B)$ with
\begin{align*}
\mu_k(B)_{i,j}=\begin{cases}
-b_{ij},&\textrm{if }i=k\textrm{ or }{j=k};\\
b_{ij}+\left(b_{ik}\lvert b_{kj}\rvert+\lvert b_{ik}\rvert b_{kj}\right)/2,&\textrm{otherwise}.
\end{cases}
\end{align*}
A \emph{cluster} is a tuple $(\x,\y)=(x_1,\ldots,x_n,y_{1},\ldots,y_{m-n})$ of $m$ algebraically independent variables over $\mathbb{Q}$. The elements $x_i$ with $i\in[n]$ are called \emph{cluster variables} and the elements $y_i$ with $i\in[1,m-n]$ are called \emph{frozen variables}. A \emph{seed} is a triple $(B,\x,\y)$ formed by a cluster and an exchange matrix. We extend the notion of mutation to seeds in the following way. The \emph{mutation} of a seed $(B,\x,\y)$ at $k\in[n]$ is the seed $(\mu_k(B),\x',\y)$ where $\x'$ is obtained from $\x$ by replacing $x_k$ with 
\begin{align*}
x_k'=\frac{1}{x_k}\left(\prod_{1\leq i \leq n\atop b_{ik}>0} x_i^{b_{ik}}  \prod_{n+1\leq i \leq m\atop b_{ik}>0} y_{i-n}^{b_{ik}} +\prod_{1\leq i \leq n\atop b_{ik}<0} x_i^{-b_{ik}} \prod_{n+1\leq i \leq m\atop b_{ik}<0} y_{i-n}^{-b_{ik}}\right).
\end{align*}
The \emph{cluster algebra} $\A(B,\x,\y)$ is the subalgebra of $\mathbb{Q}(\x,\y)$ generated by all cluster variables in all seeds obtained from $(B,\x,\y)$ by sequences of mutations.

\begin{defn}[Principal coefficients and $C$-matrices]
\label{Defn:Cmatrix}
\begin{itemize}
\item[(a)] Let $\widetilde{B}$ be an $n\times n$ exchange matrix. The $(2n)\times n$ block matrix 
\begin{align*}
B=\left(\begin{matrix}\widetilde{B}\\I\end{matrix}\right)
\end{align*} 
is said to be the extension of $\widetilde{B}$ with \emph{principal coefficients}. 
\item[(b)] Suppose we mutate $B$ along a sequence $\mathbf{i}=(i_r,\ldots,i_2,i_1)$ of mutable indices to obtain a matrix 
\begin{align*}
\mu_{\mathbf{i}}(B)=B_{\mathbf{i}}=\left(\begin{matrix}\widetilde{B}_{\mathbf{i}}\\C_{\mathbf{i}}\end{matrix}\right).
\end{align*}
Then we call $C_{\mathbf{i}}$ the \emph{$C$-matrix} (or \emph{coefficient matrix}) of $B_{\mathbf{i}}$. The column vectors of $C_{\mathbf{i}}$ are called \emph{$c$-vectors}.  
\end{itemize}
\end{defn}

Let $\widetilde{B}$ be an $n\times n$ skew-symmetrizable matrix and let $B$ be its extension with principal coefficients as above. We choose an initial cluster with cluster variables $\x=(x_1,x_2,\ldots,x_n)$ and frozen variables $\y=(y_1,y_2,\ldots,y_n)$. 

Suppose we mutate $(B,\x,\y)$ along a sequence $\mathbf{i}=(i_r,\ldots,i_2,i_1)$ of mutable indices to obtain a seed $(B_{\mathbf{i}},\x_{\mathbf{i}},\y_{\mathbf{i}})$. Let $x'\in\x_{\mathbf{i}}$. Then we denote the $c$-vector corresponding to the column of $C$ with the same label as $x'$ by $c(x')$.

Fomin--Zelevinsky \cite[Proposition 6.1]{FZ4} prove that the assignments $x_i\mapsto e_i$ and $y_i\mapsto -b_i$, where $e_i$ is the $i$-th standard basis vector and $b_i$ is the $i$-th column vector of $\widetilde{B}$, define a $\mathbb{Z}^n$-grading of the Laurent polynomial ring $\mathbb{Z}[\, x_i^{\pm 1},y_i^{\pm} \mid  i\in [n] \,]$ such that each cluster variable $x'\in\A(B,\x,\y)$ is homogeneous.

\begin{defn}[$G$-matrices] The \emph{$g$-vector} $g(x')$ of a cluster variable $x'\in\A(B,\x,\y)$ is given by the degree of $x'$ with respect to the $\mathbb{Z}^n$-grading. The \emph{$G$-matrix} of a cluster $\x'=(x_1',x_2',\ldots,x_n')$ is defined to be the matrix with columns $g(x_1'),g(x_2'),\ldots,g(x_n').$ 
\end{defn}

Suppose we mutate $(B,\x,\y)$ along a sequence $\mathbf{i}=(i_r,\ldots,i_2,i_1)$ of mutable indices to obtain a seed $(B_{\mathbf{i}},\x_{\mathbf{i}},\y_{\mathbf{i}})$. Nakanishi \cite[Theorem 4.1]{N} and Nakanishi--Zelevinsky \cite[Theorem 1.2]{NZ} prove that the $C$-matrix of $B_{\mathbf{i}}$ and the $G$-matrix of $\x_{\mathbf{i}}$ possess the tropical duality $C_{\mathbf{i}}G_{\mathbf{i}}^T=I$.

\subsection{Surface cluster algebras and snake graphs}

The focus of this article is on cluster algebras of type $A_n$ and of Kronecker type each of which can be realized as a cluster algebra of surface type \cite{FST,FT}. We will recall surface cluster algebras in this section following the exposition in \cite{CS1,MSW} in order to give a self-contained exposition of the paper.

Let $\cals$ be a connected oriented $2$-dimensional Riemann surface (with  possibly empty) boundary and $\calm$ be a finite set of marked points. We further assume that $\calm$ is contained in the boundary of $\cals$ (though this is not necessary for the general construction) and that each boundary component contains at least one marked point. If $\cals$ is a disc, we let $\calm$ to have at least $4$ marked points. We call a pair $(\cals,\calm)$ \emph{bordered surface with marked points}.

\begin{defn} [Arcs, compatible arcs, triangulation] Let~$\cals$ be a bordered surface with marked points. An \emph{arc} of $\cals$ is a non-self-intersecting curve, considered up to isotopy, with endpoints at marked points of $\cals$. We further assume that an arc is disjoint from the boundary of $\cals$ except the endpoints, which might coincide, and it does not cut out a monogon or digon. Two arcs are \emph{compatible} if they do not intersect in the interior of $\cals$, that is, there are representatives in the corresponding isotopy classes that do not intersect except the endpoints may coincide. A \emph{triangulation} $\calt$ is a maximal collection of pairwise distinct compatible arcs. The connected components in the complement $\cals\backslash \calt$ are called \emph{triangles}.
\end{defn}

Let~$\cals$ be a bordered surface with marked points and let $\calt=\{\tau_1,\dots,\tau_n\}$ be a triangulation of $\cals$. Every triangle $\Delta$ is bounded by three arcs $\tau_r$, $\tau_s$, $\tau_t$. Without loss of generality we may assume that, when going through the boundary of $\Delta$ in positive direction, we pass the arcs in order $\tau_r\to \tau_s\to\tau_t\to\tau_r$. To every triangle $\Delta$ we associate the $n\times n$ matrix $B^{\Delta}=(b_{ij}^{\Delta})$ with entries $b_{rs}=b_{st}=b_{tr}=1$, $b_{sr}=b_{ts}=b_{rt}=-1$, and $b_{ij}=0$ otherwise.

\begin{defn}[Surface cluster algebras] Let~$\cals$ be a bordered surface with marked points and $\calt=\{\tau_1,\dots,\tau_n\}$ be a triangulation of $\cals$. 
\begin{itemize}
\item For each $\tau_i\in\calt$ with $i\in[n]$ we associate formal variables $x_i$ and $y_i$. Then $(\x,\y)=(x_1,\dots,x_n,y_1,\ldots,y_n)$ is an initial cluster.
\item The \emph{signed adjacency matrix} of $\calt$ is $\widetilde{B}=\sum_{\Delta}B^{\Delta}$ where the sum runs over all triangles. The \emph{adjacency quiver} $Q_{\calt}$ of $(\cals,\calt)$ is a quiver with vertex set $[n]$ such that the number of arrows  $i\to j$ is equal to $b_{ij}$ if $b_{ij}>0$ and zero otherwise.
\item The \emph{surface cluster algebra} $\cala(\cals,\calt)$ with principal coefficients is the cluster algebra generated by the seed $(B,\x,\y)$ where $B$ is the principal extension of $\widetilde{B}$.
\end{itemize} 
\end{defn}

\subsubsection{Snake graphs} 
In this section, we recall the construction of snake graphs of \cite{MSW}. We fix a surface $\cals$ and a triangulation $\calt=\{\tau_1,\tau_2,\dots,\tau_n\}$.

Let $\gamma$ be a (generalized) arc in $(\cals,\calt).$ We consider the domain $\operatorname{dom}(\gamma)$ of $\gamma$ in $(\cals,\calt)$, that is, $\operatorname{dom}(\gamma)$ is the union of all the triangles $\gamma$ crosses (counted with multiplicity), see Figure~\ref{Fig:Dom_SG}. 

Fix an orientation of $\gamma.$ For each crossing of $\gamma$ with $\calt,$ we will associate a weighted tile, i.e. a square of fixed side length drawn in the plane with sides aligned horizontally and vertically, as follows:

\begin{itemize}
\item if $\gamma$ crosses an arc $\tau_i,$ the associated weighted tile $G_{i}$ has face weight $w(G_i)=\tau_i$ and edge weights induced by the quadrilateral $Q_i$ in $\calt$ which contains $\tau_i$ as diagonal, see Figure~\ref{Fig:Dom_SG};
\item glue successive tiles $G_{i_1},G_{i_2},\dots,G_{i_k}$ as follows: set the diagonals $\tau_k$ and $\tau_{k+1}$ from the top-left corner to the bottom-right corner of $G_k$ and $G_{k+1},$ respectively and glue the tiles $G_k$ and $G_{k+1}$ along the edge with weight induced from the third side of the triangle bounded by $\tau_k$ and $\tau_{k+1}$ in $\calt,$ see Figure~\ref{Fig:Dom_SG}.  
\end{itemize}

\begin{figure}
\begin{tikzpicture}
\node[scale=.8] at (0,0){$\begin{tikzpicture}[xscale=4,yscale=2]  

    \draw (0,0) ellipse (1 and 1);
    
    \newcommand{\DegOne}{320}  
    \node at ($(\DegOne:1 and 1)$) {$\bullet$};
    
    \newcommand{\DegTwo}{280}  
    \node at ($(\DegTwo:1 and 1)$) {$\bullet$};
    
    \newcommand{\DegThree}{90}  
    \node at ($(\DegThree:1 and 1)$) {$\bullet$};    
    
    \newcommand{\DegFour}{140}  
    \node at ($(\DegFour:1 and 1)$) {$\bullet$};
    
    \newcommand{\DegFive}{210}  
    \node at ($(\DegFive:1 and 1)$) {$\bullet$};
    
    \newcommand{\DegSix}{230}  
    \node at ($(\DegSix:1 and 1)$) {$\bullet$};  
    
    \newcommand{\DegSeven}{250}  
    \node at ($(\DegSeven:1 and 1)$) {$\bullet$};
    
    \draw ($(\DegFour:1 and 1)$) to ($(\DegFive:1 and 1)$);
    \draw ($(\DegFour:1 and 1)$) to ($(\DegSix:1 and 1)$); 
    \draw ($(\DegFour:1 and 1)$) edge ["\small{$\tau_{i-1}$}"] ($(\DegSeven:1 and 1)$);  
    
    \draw ($(\DegThree:1 and 1)$) edge ["\small{$\tau_{i+2}$}"] ($(\DegOne:1 and 1)$); 
    \draw ($(\DegThree:1 and 1)$) edge ["\small{$\tau_{i+1}$}"] ($(\DegTwo:1 and 1)$); 
    \draw ($(\DegThree:1 and 1)$) edge ["\small{$\tau_i$}"] ($(\DegSeven:1 and 1)$);

\draw[color=white] ($(\DegSeven:1 and 1)$) -- ($(\DegTwo:1 and 1)$) node [midway,above,color=black]{\small{a}};    

\draw[color=white] ($(\DegFour:1 and 1)$) -- ($(\DegThree:1 and 1)$) node [midway,color=black]{\small{b}};    
   
    \node at ($(0:1 and 1)$) {\textcolor{red}{$\bullet$}};  
    \node at ($(180:1 and 1)$) {\textcolor{red}{$\bullet$}}; 
    \draw[color=red,bend right=30] ($(0:1 and 1)$) to ($(180:1 and 1)$);
  
\end{tikzpicture}$};
\node[scale=.8] at (7,0){$\begin{tikzpicture}

\newcommand{\sidelength}{1.8}

\node (A)  at (0,0) {$\bullet$};
\node (B) at (\sidelength,0) {$\bullet$};
\node (C) at (2*\sidelength,0) {$\bullet$};

\node (D) at (0,\sidelength) {$\bullet$};
\node (E) at (\sidelength,\sidelength) {$\bullet$};
\node (F) at (2*\sidelength,\sidelength) {$\bullet$};  

\node (1)  at (-.5*\sidelength,.5*\sidelength) {$\dots$};
\node (2) at (2.7*\sidelength,.5*\sidelength) {$\dots$};  

\draw (0,0) -- (\sidelength,0) node [midway,below] {$\tau_{i-1}$};
\draw (0,\sidelength) -- (\sidelength,0) node [midway,right] {$\tau_{i}$};
\draw (0,0) -- (0,\sidelength) node[midway,left]{$b$};
\draw (\sidelength,0) -- (\sidelength,\sidelength) node[midway,right] {$a$};
\draw (0,\sidelength) -- (\sidelength,\sidelength) node[midway,above] {$\tau_{i+1}$};

\draw (\sidelength,\sidelength) -- (2*\sidelength,0) node[midway,right] {$\tau_{i+1}$};
\draw (\sidelength,0) -- (2*\sidelength,0) node [midway,below] {$\tau_{i}$};
\draw (2*\sidelength,0) -- (2*\sidelength,\sidelength) node [midway,right] {$\tau_{i+2}$};
\draw (\sidelength,\sidelength) -- (2*\sidelength,\sidelength);    
    
\end{tikzpicture}$};
\end{tikzpicture}
\caption{The domain of an arc (left) and the construction of its snake graph (right)}
\label{Fig:Dom_SG}
\end{figure}
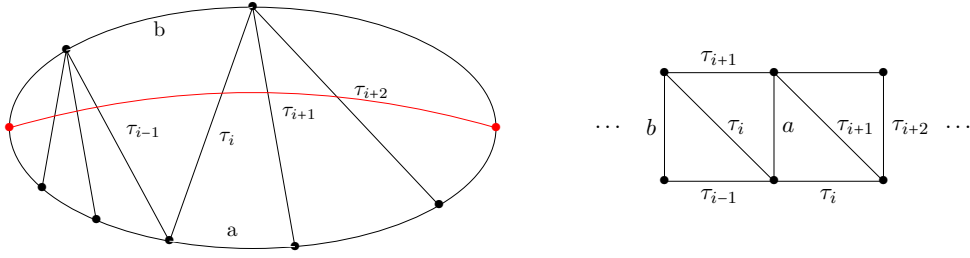

\begin{rem}
 The snake graph $\calg_{\gamma}$ associated to an arc $\gamma$ in a triangulated surface is simply an unfolding of the domain of $\gamma$ where zig-zag pieces in the triangulation correspond to straight subgraphs in $G_{\gamma}$ and fans in the triangulation correspond to corner subgraphs in $G_{\gamma}$. The edge and face weights on $\calg_{\gamma}$ are induced by the triangulation $\calt.$
\end{rem}

\begin{defn} A \emph{perfect matching} of a graph $\SG$ is a set of edges of $\SG$ such that each vertex $v$ in $\SG$ is incident to precisely one edge $e$ in $P.$ We denote by $\Match (\SG)$ the set of perfect matchings of the graph $\SG$, and for brevity we write 
$\PM \models \SG$ if $\PM$ is a perfect matching of $\SG$.
\end{defn}

The set $\Match(\SG_{\gamma})$ of a snake graph $\SG_{\gamma}$ admits a rich combinatorial structure which we will discuss next.

\begin{defn}[Perfect matching lattice]
The \emph{perfect matching graph} $L(\SG)$ of a snake graph $\SG$ is defined as follows: its vertices are the perfect matchings of $\SG$, and two vertices are connected by an edge if the perfect matchings are obtained from each other by a single twist.
\end{defn}

By \cite{MSW}, the perfect matching graph admits a lattice structure with maximal and minimal perfect matchings given by those  containing only boundary edges of $\SG$. We refer to \cite{MSW} for a precise definition of the minimum and maximum perfect matchings and explain the lattice later in more detail.

\begin{ex} The minimal and maximal perfect matchings of the snake graph \begin{tikzpicture}[scale=.7]
\newcommand{\hd}{3cm} 
\newcommand{\vd}{1.8cm} 
\newcommand{\dist}{0.5cm} 

\path[draw] (1*\dist,0) edge node {} (2*\dist,0);
\path[draw] (2*\dist,0) edge node {} (3*\dist,0);
\path[draw] (3*\dist,0) edge node {} (4*\dist,0);
\path[draw] (4*\dist,0) edge node {} (5*\dist,0);

\path[draw] (1*\dist,\dist) edge node {} (2*\dist,\dist);
\path[draw] (2*\dist,\dist) edge node {} (3*\dist,\dist);
\path[draw] (3*\dist,\dist) edge node {} (4*\dist,\dist);
\path[draw] (4*\dist,\dist) edge node {} (5*\dist,\dist);

\path[draw] (4*\dist,2*\dist) edge node {} (5*\dist,2*\dist);

\path[draw] (1*\dist,0) edge node {} (1*\dist,\dist);
\path[draw] (2*\dist,0) edge node {} (2*\dist,\dist);
\path[draw] (3*\dist,0) edge node {} (3*\dist,\dist);
\path[draw] (4*\dist,0) edge node {} (4*\dist,\dist);
\path[draw] (5*\dist,0) edge node {} (5*\dist,\dist);
\path[draw] (4*\dist,\dist) edge node {} (4*\dist,2*\dist);
\path[draw] (5*\dist,\dist) edge node {} (5*\dist,2*\dist);

\node[scale=.8] (T1) at (1.5*\dist,0.5*\dist) {$3$};
\node[scale=.8] (T2) at (2.5*\dist,0.5*\dist) {$1$};
\node[scale=.8] (T3) at (3.5*\dist,0.5*\dist) {$2$};
\node[scale=.8] (T4) at (4.5*\dist,0.5*\dist) {$3$};
\node[scale=.8] (T5) at (4.5*\dist,1.5*\dist) {$1$};

\end{tikzpicture} are \begin{tikzpicture}[scale=.7]
\newcommand{\hd}{3cm} 
\newcommand{\vd}{1.8cm} 
\newcommand{\dist}{0.5cm} 

\path[draw] (1*\dist,0) edge node {} (2*\dist,0);
\path[draw,ultra thick] (2*\dist,0) edge node {} (3*\dist,0);
\path[draw] (3*\dist,0) edge node {} (4*\dist,0);
\path[draw,ultra thick] (4*\dist,0) edge node {} (5*\dist,0);

\path[draw] (1*\dist,\dist) edge node {} (2*\dist,\dist);
\path[draw,ultra thick] (2*\dist,\dist) edge node {} (3*\dist,\dist);
\path[draw] (3*\dist,\dist) edge node {} (4*\dist,\dist);
\path[draw] (4*\dist,\dist) edge node {} (5*\dist,\dist);

\path[draw] (4*\dist,2*\dist) edge node {} (5*\dist,2*\dist);

\path[draw,ultra thick] (1*\dist,0) edge node {} (1*\dist,\dist);
\path[draw] (2*\dist,0) edge node {} (2*\dist,\dist);
\path[draw] (3*\dist,0) edge node {} (3*\dist,\dist);
\path[draw] (4*\dist,0) edge node {} (4*\dist,\dist);
\path[draw] (5*\dist,0) edge node {} (5*\dist,\dist);
\path[draw,ultra thick] (4*\dist,\dist) edge node {} (4*\dist,2*\dist);
\path[draw,ultra thick] (5*\dist,\dist) edge node {} (5*\dist,2*\dist);

\node[scale=.8] (T1) at (1.5*\dist,0.5*\dist) {$3$};
\node[scale=.8] (T2) at (2.5*\dist,0.5*\dist) {$1$};
\node[scale=.8] (T3) at (3.5*\dist,0.5*\dist) {$2$};
\node[scale=.8] (T4) at (4.5*\dist,0.5*\dist) {$3$};
\node[scale=.8] (T5) at (4.5*\dist,1.5*\dist) {$1$};

\end{tikzpicture} and \begin{tikzpicture}[scale=.7]
\newcommand{\hd}{3cm} 
\newcommand{\vd}{1.8cm} 
\newcommand{\dist}{0.5cm} 

\path[draw,ultra thick] (1*\dist,0) edge node {} (2*\dist,0);
\path[draw] (2*\dist,0) edge node {} (3*\dist,0);
\path[draw,ultra thick] (3*\dist,0) edge node {} (4*\dist,0);
\path[draw] (4*\dist,0) edge node {} (5*\dist,0);

\path[draw,ultra thick] (1*\dist,\dist) edge node {} (2*\dist,\dist);
\path[draw] (2*\dist,\dist) edge node {} (3*\dist,\dist);
\path[draw,ultra thick] (3*\dist,\dist) edge node {} (4*\dist,\dist);
\path[draw] (4*\dist,\dist) edge node {} (5*\dist,\dist);

\path[draw,ultra thick] (4*\dist,2*\dist) edge node {} (5*\dist,2*\dist);

\path[draw] (1*\dist,0) edge node {} (1*\dist,\dist);
\path[draw] (2*\dist,0) edge node {} (2*\dist,\dist);
\path[draw] (3*\dist,0) edge node {} (3*\dist,\dist);
\path[draw] (4*\dist,0) edge node {} (4*\dist,\dist);
\path[draw,ultra thick] (5*\dist,0) edge node {} (5*\dist,\dist);

\path[draw] (4*\dist,\dist) edge node {} (4*\dist,2*\dist);
\path[draw] (5*\dist,\dist) edge node {} (5*\dist,2*\dist);

\node[scale=.8] (T1) at (1.5*\dist,0.5*\dist) {$3$};
\node[scale=.8] (T2) at (2.5*\dist,0.5*\dist) {$1$};
\node[scale=.8] (T3) at (3.5*\dist,0.5*\dist) {$2$};
\node[scale=.8] (T4) at (4.5*\dist,0.5*\dist) {$3$};
\node[scale=.8] (T5) at (4.5*\dist,1.5*\dist) {$1$};

\end{tikzpicture}
and its perfect matching lattice is given in Figure~\ref{Figure:PMLattice}.

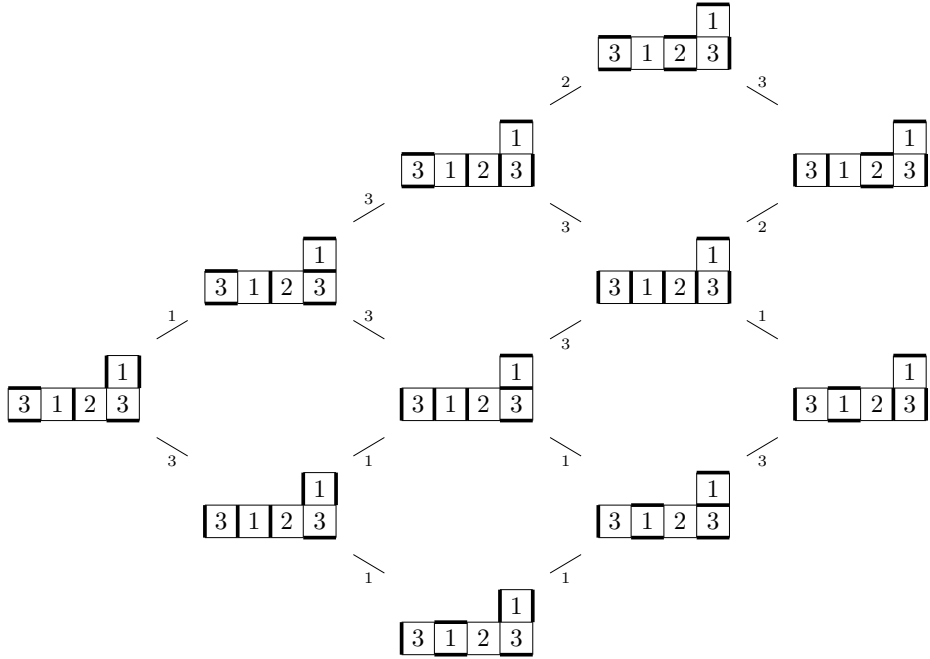
\begin{figure}

\begin{center}

\resizebox{.8\textwidth}{!}{
\begin{tikzpicture}

\newcommand{\hd}{3cm} 
\newcommand{\vd}{1.8cm} 
\newcommand{\dist}{0.5cm} 

\node (0) at (0,0) {\begin{tikzpicture}

\path[draw] (1*\dist,0) edge node {} (2*\dist,0);
\path[draw,ultra thick] (2*\dist,0) edge node {} (3*\dist,0);
\path[draw] (3*\dist,0) edge node {} (4*\dist,0);
\path[draw,ultra thick] (4*\dist,0) edge node {} (5*\dist,0);

\path[draw] (1*\dist,\dist) edge node {} (2*\dist,\dist);
\path[draw,ultra thick] (2*\dist,\dist) edge node {} (3*\dist,\dist);
\path[draw] (3*\dist,\dist) edge node {} (4*\dist,\dist);
\path[draw] (4*\dist,\dist) edge node {} (5*\dist,\dist);

\path[draw] (4*\dist,2*\dist) edge node {} (5*\dist,2*\dist);

\path[draw,ultra thick] (1*\dist,0) edge node {} (1*\dist,\dist);
\path[draw] (2*\dist,0) edge node {} (2*\dist,\dist);
\path[draw] (3*\dist,0) edge node {} (3*\dist,\dist);
\path[draw] (4*\dist,0) edge node {} (4*\dist,\dist);
\path[draw] (5*\dist,0) edge node {} (5*\dist,\dist);
\path[draw,ultra thick] (4*\dist,\dist) edge node {} (4*\dist,2*\dist);
\path[draw,ultra thick] (5*\dist,\dist) edge node {} (5*\dist,2*\dist);

\node (T1) at (1.5*\dist,0.5*\dist) {$3$};
\node (T2) at (2.5*\dist,0.5*\dist) {$1$};
\node (T3) at (3.5*\dist,0.5*\dist) {$2$};
\node (T4) at (4.5*\dist,0.5*\dist) {$3$};
\node (T5) at (4.5*\dist,1.5*\dist) {$1$};

\end{tikzpicture}};

\node (1) at (-\hd,\vd) {\begin{tikzpicture}

\path[draw] (1*\dist,0) edge node {} (2*\dist,0);
\path[draw] (2*\dist,0) edge node {} (3*\dist,0);
\path[draw] (3*\dist,0) edge node {} (4*\dist,0);
\path[draw,ultra thick] (4*\dist,0) edge node {} (5*\dist,0);

\path[draw] (1*\dist,\dist) edge node {} (2*\dist,\dist);
\path[draw] (2*\dist,\dist) edge node {} (3*\dist,\dist);
\path[draw] (3*\dist,\dist) edge node {} (4*\dist,\dist);
\path[draw] (4*\dist,\dist) edge node {} (5*\dist,\dist);

\path[draw] (4*\dist,2*\dist) edge node {} (5*\dist,2*\dist);

\path[draw,ultra thick] (1*\dist,0) edge node {} (1*\dist,\dist);
\path[draw,ultra thick] (2*\dist,0) edge node {} (2*\dist,\dist);
\path[draw,ultra thick] (3*\dist,0) edge node {} (3*\dist,\dist);
\path[draw] (4*\dist,0) edge node {} (4*\dist,\dist);
\path[draw] (5*\dist,0) edge node {} (5*\dist,\dist);
\path[draw,ultra thick] (4*\dist,\dist) edge node {} (4*\dist,2*\dist);
\path[draw,ultra thick] (5*\dist,\dist) edge node {} (5*\dist,2*\dist);

\node (T1) at (1.5*\dist,0.5*\dist) {$3$};
\node (T2) at (2.5*\dist,0.5*\dist) {$1$};
\node (T3) at (3.5*\dist,0.5*\dist) {$2$};
\node (T4) at (4.5*\dist,0.5*\dist) {$3$};
\node (T5) at (4.5*\dist,1.5*\dist) {$1$};

\end{tikzpicture}};

\node (2) at (\hd,\vd) {\begin{tikzpicture}

\path[draw] (1*\dist,0) edge node {} (2*\dist,0);
\path[draw,ultra thick] (2*\dist,0) edge node {} (3*\dist,0);
\path[draw] (3*\dist,0) edge node {} (4*\dist,0);
\path[draw,ultra thick] (4*\dist,0) edge node {} (5*\dist,0);

\path[draw] (1*\dist,\dist) edge node {} (2*\dist,\dist);
\path[draw,ultra thick] (2*\dist,\dist) edge node {} (3*\dist,\dist);
\path[draw] (3*\dist,\dist) edge node {} (4*\dist,\dist);
\path[draw,ultra thick] (4*\dist,\dist) edge node {} (5*\dist,\dist);

\path[draw,ultra thick] (4*\dist,2*\dist) edge node {} (5*\dist,2*\dist);

\path[draw,ultra thick] (1*\dist,0) edge node {} (1*\dist,\dist);
\path[draw] (2*\dist,0) edge node {} (2*\dist,\dist);
\path[draw] (3*\dist,0) edge node {} (3*\dist,\dist);
\path[draw] (4*\dist,0) edge node {} (4*\dist,\dist);
\path[draw] (5*\dist,0) edge node {} (5*\dist,\dist);
\path[draw] (4*\dist,\dist) edge node {} (4*\dist,2*\dist);
\path[draw] (5*\dist,\dist) edge node {} (5*\dist,2*\dist);

\node (T1) at (1.5*\dist,0.5*\dist) {$3$};
\node (T2) at (2.5*\dist,0.5*\dist) {$1$};
\node (T3) at (3.5*\dist,0.5*\dist) {$2$};
\node (T4) at (4.5*\dist,0.5*\dist) {$3$};
\node (T5) at (4.5*\dist,1.5*\dist) {$1$};

\end{tikzpicture}};

\node (3) at (0,2*\vd) {\begin{tikzpicture}

\path[draw] (1*\dist,0) edge node {} (2*\dist,0);
\path[draw] (2*\dist,0) edge node {} (3*\dist,0);
\path[draw] (3*\dist,0) edge node {} (4*\dist,0);
\path[draw,ultra thick] (4*\dist,0) edge node {} (5*\dist,0);

\path[draw] (1*\dist,\dist) edge node {} (2*\dist,\dist);
\path[draw] (2*\dist,\dist) edge node {} (3*\dist,\dist);
\path[draw] (3*\dist,\dist) edge node {} (4*\dist,\dist);
\path[draw,ultra thick] (4*\dist,\dist) edge node {} (5*\dist,\dist);

\path[draw,ultra thick] (4*\dist,2*\dist) edge node {} (5*\dist,2*\dist);

\path[draw,ultra thick] (1*\dist,0) edge node {} (1*\dist,\dist);
\path[draw,ultra thick] (2*\dist,0) edge node {} (2*\dist,\dist);
\path[draw,ultra thick] (3*\dist,0) edge node {} (3*\dist,\dist);
\path[draw] (4*\dist,0) edge node {} (4*\dist,\dist);
\path[draw] (5*\dist,0) edge node {} (5*\dist,\dist);
\path[draw] (4*\dist,\dist) edge node {} (4*\dist,2*\dist);
\path[draw] (5*\dist,\dist) edge node {} (5*\dist,2*\dist);

\node (T1) at (1.5*\dist,0.5*\dist) {$3$};
\node (T2) at (2.5*\dist,0.5*\dist) {$1$};
\node (T3) at (3.5*\dist,0.5*\dist) {$2$};
\node (T4) at (4.5*\dist,0.5*\dist) {$3$};
\node (T5) at (4.5*\dist,1.5*\dist) {$1$};

\end{tikzpicture}};

\node (4) at (-2*\hd,2*\vd) {\begin{tikzpicture}

\path[draw,ultra thick] (1*\dist,0) edge node {} (2*\dist,0);
\path[draw] (2*\dist,0) edge node {} (3*\dist,0);
\path[draw] (3*\dist,0) edge node {} (4*\dist,0);
\path[draw,ultra thick] (4*\dist,0) edge node {} (5*\dist,0);

\path[draw,ultra thick] (1*\dist,\dist) edge node {} (2*\dist,\dist);
\path[draw] (2*\dist,\dist) edge node {} (3*\dist,\dist);
\path[draw] (3*\dist,\dist) edge node {} (4*\dist,\dist);
\path[draw] (4*\dist,\dist) edge node {} (5*\dist,\dist);

\path[draw] (4*\dist,2*\dist) edge node {} (5*\dist,2*\dist);

\path[draw] (1*\dist,0) edge node {} (1*\dist,\dist);
\path[draw] (2*\dist,0) edge node {} (2*\dist,\dist);
\path[draw,ultra thick] (3*\dist,0) edge node {} (3*\dist,\dist);
\path[draw] (4*\dist,0) edge node {} (4*\dist,\dist);
\path[draw] (5*\dist,0) edge node {} (5*\dist,\dist);
\path[draw,ultra thick] (4*\dist,\dist) edge node {} (4*\dist,2*\dist);
\path[draw,ultra thick] (5*\dist,\dist) edge node {} (5*\dist,2*\dist);

\node (T1) at (1.5*\dist,0.5*\dist) {$3$};
\node (T2) at (2.5*\dist,0.5*\dist) {$1$};
\node (T3) at (3.5*\dist,0.5*\dist) {$2$};
\node (T4) at (4.5*\dist,0.5*\dist) {$3$};
\node (T5) at (4.5*\dist,1.5*\dist) {$1$};

\end{tikzpicture}};

\node (5) at (2*\hd,2*\vd) {\begin{tikzpicture}

\path[draw] (1*\dist,0) edge node {} (2*\dist,0);
\path[draw,ultra thick] (2*\dist,0) edge node {} (3*\dist,0);
\path[draw] (3*\dist,0) edge node {} (4*\dist,0);
\path[draw] (4*\dist,0) edge node {} (5*\dist,0);

\path[draw] (1*\dist,\dist) edge node {} (2*\dist,\dist);
\path[draw,ultra thick] (2*\dist,\dist) edge node {} (3*\dist,\dist);
\path[draw] (3*\dist,\dist) edge node {} (4*\dist,\dist);
\path[draw] (4*\dist,\dist) edge node {} (5*\dist,\dist);

\path[draw,ultra thick] (4*\dist,2*\dist) edge node {} (5*\dist,2*\dist);

\path[draw,ultra thick] (1*\dist,0) edge node {} (1*\dist,\dist);
\path[draw] (2*\dist,0) edge node {} (2*\dist,\dist);
\path[draw] (3*\dist,0) edge node {} (3*\dist,\dist);
\path[draw,ultra thick] (4*\dist,0) edge node {} (4*\dist,\dist);
\path[draw,ultra thick] (5*\dist,0) edge node {} (5*\dist,\dist);

\path[draw] (4*\dist,\dist) edge node {} (4*\dist,2*\dist);
\path[draw] (5*\dist,\dist) edge node {} (5*\dist,2*\dist);

\node (T1) at (1.5*\dist,0.5*\dist) {$3$};
\node (T2) at (2.5*\dist,0.5*\dist) {$1$};
\node (T3) at (3.5*\dist,0.5*\dist) {$2$};
\node (T4) at (4.5*\dist,0.5*\dist) {$3$};
\node (T5) at (4.5*\dist,1.5*\dist) {$1$};

\end{tikzpicture}};

\node (6) at (-\hd,3*\vd) {\begin{tikzpicture}

\path[draw,ultra thick] (1*\dist,0) edge node {} (2*\dist,0);
\path[draw] (2*\dist,0) edge node {} (3*\dist,0);
\path[draw] (3*\dist,0) edge node {} (4*\dist,0);
\path[draw,ultra thick] (4*\dist,0) edge node {} (5*\dist,0);

\path[draw,ultra thick] (1*\dist,\dist) edge node {} (2*\dist,\dist);
\path[draw] (2*\dist,\dist) edge node {} (3*\dist,\dist);
\path[draw] (3*\dist,\dist) edge node {} (4*\dist,\dist);
\path[draw,ultra thick] (4*\dist,\dist) edge node {} (5*\dist,\dist);

\path[draw,ultra thick] (4*\dist,2*\dist) edge node {} (5*\dist,2*\dist);

\path[draw] (1*\dist,0) edge node {} (1*\dist,\dist);
\path[draw] (2*\dist,0) edge node {} (2*\dist,\dist);
\path[draw,ultra thick] (3*\dist,0) edge node {} (3*\dist,\dist);
\path[draw] (4*\dist,0) edge node {} (4*\dist,\dist);
\path[draw] (5*\dist,0) edge node {} (5*\dist,\dist);
\path[draw] (4*\dist,\dist) edge node {} (4*\dist,2*\dist);
\path[draw] (5*\dist,\dist) edge node {} (5*\dist,2*\dist);

\node (T1) at (1.5*\dist,0.5*\dist) {$3$};
\node (T2) at (2.5*\dist,0.5*\dist) {$1$};
\node (T3) at (3.5*\dist,0.5*\dist) {$2$};
\node (T4) at (4.5*\dist,0.5*\dist) {$3$};
\node (T5) at (4.5*\dist,1.5*\dist) {$1$};

\end{tikzpicture}};

\node (7) at (\hd,3*\vd) {\begin{tikzpicture}

\path[draw] (1*\dist,0) edge node {} (2*\dist,0);
\path[draw] (2*\dist,0) edge node {} (3*\dist,0);
\path[draw] (3*\dist,0) edge node {} (4*\dist,0);
\path[draw] (4*\dist,0) edge node {} (5*\dist,0);

\path[draw] (1*\dist,\dist) edge node {} (2*\dist,\dist);
\path[draw] (2*\dist,\dist) edge node {} (3*\dist,\dist);
\path[draw] (3*\dist,\dist) edge node {} (4*\dist,\dist);
\path[draw] (4*\dist,\dist) edge node {} (5*\dist,\dist);

\path[draw,ultra thick] (4*\dist,2*\dist) edge node {} (5*\dist,2*\dist);

\path[draw,ultra thick] (1*\dist,0) edge node {} (1*\dist,\dist);
\path[draw,ultra thick] (2*\dist,0) edge node {} (2*\dist,\dist);
\path[draw,ultra thick] (3*\dist,0) edge node {} (3*\dist,\dist);
\path[draw,ultra thick] (4*\dist,0) edge node {} (4*\dist,\dist);
\path[draw,ultra thick] (5*\dist,0) edge node {} (5*\dist,\dist);

\path[draw] (4*\dist,\dist) edge node {} (4*\dist,2*\dist);
\path[draw] (5*\dist,\dist) edge node {} (5*\dist,2*\dist);

\node (T1) at (1.5*\dist,0.5*\dist) {$3$};
\node (T2) at (2.5*\dist,0.5*\dist) {$1$};
\node (T3) at (3.5*\dist,0.5*\dist) {$2$};
\node (T4) at (4.5*\dist,0.5*\dist) {$3$};
\node (T5) at (4.5*\dist,1.5*\dist) {$1$};

\end{tikzpicture}};

\node (8) at (0*\hd,4*\vd) {\begin{tikzpicture}

\path[draw,ultra thick] (1*\dist,0) edge node {} (2*\dist,0);
\path[draw] (2*\dist,0) edge node {} (3*\dist,0);
\path[draw] (3*\dist,0) edge node {} (4*\dist,0);
\path[draw] (4*\dist,0) edge node {} (5*\dist,0);

\path[draw,ultra thick] (1*\dist,\dist) edge node {} (2*\dist,\dist);
\path[draw] (2*\dist,\dist) edge node {} (3*\dist,\dist);
\path[draw] (3*\dist,\dist) edge node {} (4*\dist,\dist);
\path[draw] (4*\dist,\dist) edge node {} (5*\dist,\dist);

\path[draw,ultra thick] (4*\dist,2*\dist) edge node {} (5*\dist,2*\dist);

\path[draw] (1*\dist,0) edge node {} (1*\dist,\dist);
\path[draw] (2*\dist,0) edge node {} (2*\dist,\dist);
\path[draw,ultra thick] (3*\dist,0) edge node {} (3*\dist,\dist);
\path[draw,ultra thick] (4*\dist,0) edge node {} (4*\dist,\dist);
\path[draw,ultra thick] (5*\dist,0) edge node {} (5*\dist,\dist);

\path[draw] (4*\dist,\dist) edge node {} (4*\dist,2*\dist);
\path[draw] (5*\dist,\dist) edge node {} (5*\dist,2*\dist);

\node (T1) at (1.5*\dist,0.5*\dist) {$3$};
\node (T2) at (2.5*\dist,0.5*\dist) {$1$};
\node (T3) at (3.5*\dist,0.5*\dist) {$2$};
\node (T4) at (4.5*\dist,0.5*\dist) {$3$};
\node (T5) at (4.5*\dist,1.5*\dist) {$1$};

\end{tikzpicture}};

\node (9) at (2*\hd,4*\vd) {\begin{tikzpicture}

\path[draw] (1*\dist,0) edge node {} (2*\dist,0);
\path[draw] (2*\dist,0) edge node {} (3*\dist,0);
\path[draw,ultra thick] (3*\dist,0) edge node {} (4*\dist,0);
\path[draw] (4*\dist,0) edge node {} (5*\dist,0);

\path[draw] (1*\dist,\dist) edge node {} (2*\dist,\dist);
\path[draw] (2*\dist,\dist) edge node {} (3*\dist,\dist);
\path[draw,ultra thick] (3*\dist,\dist) edge node {} (4*\dist,\dist);
\path[draw] (4*\dist,\dist) edge node {} (5*\dist,\dist);

\path[draw,ultra thick] (4*\dist,2*\dist) edge node {} (5*\dist,2*\dist);

\path[draw,ultra thick] (1*\dist,0) edge node {} (1*\dist,\dist);
\path[draw,ultra thick] (2*\dist,0) edge node {} (2*\dist,\dist);
\path[draw] (3*\dist,0) edge node {} (3*\dist,\dist);
\path[draw] (4*\dist,0) edge node {} (4*\dist,\dist);
\path[draw,ultra thick] (5*\dist,0) edge node {} (5*\dist,\dist);

\path[draw] (4*\dist,\dist) edge node {} (4*\dist,2*\dist);
\path[draw] (5*\dist,\dist) edge node {} (5*\dist,2*\dist);

\node (T1) at (1.5*\dist,0.5*\dist) {$3$};
\node (T2) at (2.5*\dist,0.5*\dist) {$1$};
\node (T3) at (3.5*\dist,0.5*\dist) {$2$};
\node (T4) at (4.5*\dist,0.5*\dist) {$3$};
\node (T5) at (4.5*\dist,1.5*\dist) {$1$};

\end{tikzpicture}};

\node (10) at (\hd,5*\vd) {\begin{tikzpicture}

\path[draw,ultra thick] (1*\dist,0) edge node {} (2*\dist,0);
\path[draw] (2*\dist,0) edge node {} (3*\dist,0);
\path[draw,ultra thick] (3*\dist,0) edge node {} (4*\dist,0);
\path[draw] (4*\dist,0) edge node {} (5*\dist,0);

\path[draw,ultra thick] (1*\dist,\dist) edge node {} (2*\dist,\dist);
\path[draw] (2*\dist,\dist) edge node {} (3*\dist,\dist);
\path[draw,ultra thick] (3*\dist,\dist) edge node {} (4*\dist,\dist);
\path[draw] (4*\dist,\dist) edge node {} (5*\dist,\dist);

\path[draw,ultra thick] (4*\dist,2*\dist) edge node {} (5*\dist,2*\dist);

\path[draw] (1*\dist,0) edge node {} (1*\dist,\dist);
\path[draw] (2*\dist,0) edge node {} (2*\dist,\dist);
\path[draw] (3*\dist,0) edge node {} (3*\dist,\dist);
\path[draw] (4*\dist,0) edge node {} (4*\dist,\dist);
\path[draw,ultra thick] (5*\dist,0) edge node {} (5*\dist,\dist);

\path[draw] (4*\dist,\dist) edge node {} (4*\dist,2*\dist);
\path[draw] (5*\dist,\dist) edge node {} (5*\dist,2*\dist);

\node (T1) at (1.5*\dist,0.5*\dist) {$3$};
\node (T2) at (2.5*\dist,0.5*\dist) {$1$};
\node (T3) at (3.5*\dist,0.5*\dist) {$2$};
\node (T4) at (4.5*\dist,0.5*\dist) {$3$};
\node (T5) at (4.5*\dist,1.5*\dist) {$1$};

\end{tikzpicture}};

\path[draw,-,left] (0) edge node[midway, below] {\tiny{$1$}} (1);
\path[draw,-,right] (0) edge node[midway, below] {\tiny{$1$}} (2);
\path[draw,-,left] (1) edge node[midway, below] {\tiny{$1$}} (3);
\path[draw,-,right] (2) edge node[midway, below] {\tiny{$1$}} (3);
\path[draw,-,left] (1) edge node[midway, below] {\tiny{$3$}} (4);
\path[draw,-,right] (2) edge node[midway, below] {\tiny{$3$}} (5);
\path[draw,-,right] (3) edge node[midway, above] {\tiny{$3$}} (6);
\path[draw,-,left] (4) edge node[midway, above] {\tiny{$1$}} (6);
\path[draw,-,left] (3) edge node[midway, below] {\tiny{$3$}} (7);
\path[draw,-,right] (5) edge node[midway, above] {\tiny{$1$}} (7);
\path[draw,-,left] (6) edge node[midway, above] {\tiny{$3$}} (8);
\path[draw,-,right] (7) edge node[midway, below] {\tiny{$3$}} (8);
\path[draw,-,right] (7) edge node[midway, below] {\tiny{$2$}} (9);
\path[draw,-,left] (8) edge node[midway, above] {\tiny{$2$}} (10);
\path[draw,-,right] (9) edge node[midway, above] {\tiny{$3$}} (10);

\end{tikzpicture}
}
\end{center}

\caption{The lattice of perfect matchings of a snake graph}
\label{Figure:PMLattice}
\end{figure}

\end{ex}

The twist admits a nice combinatorial interpretation. For every perfect matching $P$ of a snake graph $\SG$, we consider the symmetric difference $\operatorname{Sym}(\PM)=(\PM\cup\PM_{min})\backslash(\PM\cap\PM_{min})$. The set $\operatorname{Sym}(\PM)$ defines a union of cycles in the planar realization of $\SG$. We say that a tile $G$ of $\SG$ is \emph{twisted} if it lies in the interior of one of these cycles. The set of twisted tiles of $\PM$ is denoted by $\operatorname{Twist}(\PM)$. Furthermore, for an arc $\tau$ we define $\operatorname{Twist}(\PM)_{\tau}$ to be the subset of $\operatorname{Twist}(\PM)$ consisting of the tiles $G$ with face weight $\tau$. The name is justified by a theorem of Musiker--Schiffler--Williams according to which the set $\operatorname{Twist}(\PM)$ is equal to the set of tiles twisted along any shortest path from $\PM_{min}$ to $\PM$ in $L(\SG)$.

Musiker--Schiffler--Williams \cite[Theorem 5.2]{MSW2} show that the graph $L(\SG)$ is the Hasse diagram of the poset $\Match(\SG,\leq)$ when we define $\PM_1\leq\PM_2$ if and only if $\operatorname{Twist}(\PM_1)\subseteq\operatorname{Twist}(\PM_2)$. In fact, it is a distributive lattice with minimal element $\PM_{min}$ and maximal element $\PM_{max}$. Their theorem is a consequence of work by Propp \cite[Theorem 2]{Pr}.

\subsubsection{Laurent polynomials associated to snake graphs} \label{Sec:LPs-SGs}
Let $\cals$ be a surface with triangulation $\calt=\{\tau_1,\tau_2,\dots,\tau_n\}$. Let $\gamma$ be an arc in $\cals$ and $\SG$ its associated snake graph. Let $P$ be a perfect matching of $\SG.$ Then
\begin{itemize}
\item we assign formal variables $x_{\tau_i},y_{\tau_i}$ to each arc $\tau_i$ in $\calt$, and often use the abbreviations $x_i=x_{\tau_i}$ and $y_i=y_{\tau_i}$;
\item the \emph{weight monomial} $x(P)$ of $P$ is given by $x(P)= \displaystyle{\prod_{e\in P}} x_{w(e)}$;
\item the \emph{height monomial} $y(P)$ of $P$ is given by $y(P)= \displaystyle{\prod_{G \in \Tw(P)}} y_{w(G)}$; 
\item the \emph{crossing monomial} of $\gamma$ with respect to $\calt$ is given by $\cross (\gamma,\calt) = \displaystyle{\prod_{j\in J}} x_j$ where $J$ is the index set of the arcs in $\cals$ that $\gamma$ crosses;
\item the \emph{Laurent polynomial} associated to $\SG$ with respect to $\calt$ is defined as 
\[x_{\SG}= \frac{1}{\cross(\gamma,\calt)} \displaystyle{\sum_{P\models \SG}} x(P)y(P).\]
\end{itemize}

\begin{theorem}[\cite{MSW}]
\label{Thm:MSWExpansion}
Let $\cals$ be a surface with triangulation $\calt$ and let $\cala(\cals)$ be the cluster algebra associated to $\cals$. Let $\gamma$ be a (generalized) arc of $\cals$ and $\SG$ be its associated snake graph with respect to $\calt$ and $x_{\SG}$ be the Laurent polynomial associated to $\gamma.$ Then $x_{\SG}=x_{\gamma}.$ 
\end{theorem}

\begin{rem}Let $x_{\gamma}\in\A(\cals,\calt)$ be the cluster variable associated to an arc $\gamma$. The weight monomial from Section~\ref{Sec:LPs-SGs} is related to the $g$-vector of $x_{\gamma}$ with respect to the initial seed associated with $\T$, see \cite[Proposition 6.2]{MSW2}; namely
\begin{align}
\label{Eqn:gvector}
g(x_{\gamma})=\operatorname{deg}\left(x(\PM_{min})\right)-\operatorname{deg}\left(\operatorname{cross(\T,\gamma)}\right)\in\mathbb{Z}^n.
\end{align}
\end{rem}

In the above situation, for every perfect matching $P$ of $\SG_{\gamma}$ we put
\begin{align*}
\nu(P)=\left(\operatorname{deg}\left(x(\PM)\right)-\operatorname{deg}\left(\operatorname{cross(\T,\gamma)}\right),y(P)\right)\in\mathbb{Z}^{2n}.
\end{align*}

\subsection{Quantum cluster algebras} \label{Sec:QCA}

In this section we give a brief introduction to quantum cluster algebras. The section follows Berenstein--Zelevinsky \cite{BZ}. Let $q^{1/2}$ be an indeterminate and let $q^{-1/2}$ be its formal inverse.

\begin{defn}[Principal quantization]
Let $\widetilde{B}$ be an $n\times n$ skew-symmetric matrix. We consider the extension with principal coefficients $B$ of size $(2n)\times n$ and a $(2n)\times(2n)$ matrix $\Lambda$ as follows:
\begin{align*}
B=\left(\begin{matrix}\widetilde{B}\\I\end{matrix}\right),&&\Lambda=\left(\begin{matrix}0&-I\\I&-\widetilde{B}\end{matrix}\right).
\end{align*}  
We call the pair $(B,\Lambda)$ the \emph{principal quantization pair} of $\widetilde{B}$ and call $\Lambda$ the \emph{principal quasi-commutation matrix} of $\widetilde{B}$.
\end{defn}

Zelevinsky \cite[Example 0.5]{Z2} proves that the principal quantization pair $(B,\Lambda)$ attached to a skew-symmetric $n\times n$ matrix $\widetilde{B}$ is always \emph{compatible}, that is, the matrices obey the relation $B^T\Lambda=\left(\begin{matrix}I&0\end{matrix}\right)$.

Fomin--Zelevinsky's \emph{sign coherence conjecture} asserts that the entries in a $c$-vector are either all non-negative or all non-positive. Fomin--Zelevinsky \cite[Proposition 5.6]{FZ4} prove that the sign coherence conjecture is equivalent to the \emph{constant term conjecture} for $F$-polynomials. This conjecture holds true by a result of Derksen--Weyman--Zelevinsky \cite[Theorem 1.7]{DWZ}. Later different proofs of sign-coherence were given by Nagao \cite[Theorem 9.9]{Na}, Plamondon \cite[Theorem 3.13]{Pl} and Gross--Hacking--Keel--Kontsevich \cite[Corollary 5.5]{GHKK}. We use sign-coherence in the proof of the following proposition.

\begin{prop} 
\label{Prop:GMatrix}
Let $\widetilde{B}$ be an $n\times n$ skew-symmetric matrix. If we mutate the principal quantization pair $(B,\Lambda)$ along a sequence $\mathbf{i}=(i_r,\ldots,i_2,i_1)$ of mutable indices, then
\begin{align*}
\mu_{\mathbf{i}}(B,\Lambda)=(B_{\mathbf{i}},\Lambda_{\mathbf{i}})=\left(\left(\begin{matrix}\widetilde{B}_{\mathbf{i}}\\C_{\mathbf{i}}\end{matrix}\right),\left(\begin{matrix}0&-G_{\mathbf{i}}^T\\G_{\mathbf{i}}&G_{\mathbf{i}}\widetilde{B}_{\mathbf{i}}^{T}G_{\mathbf{i}}^T
\end{matrix}\right)\right).
\end{align*}
\end{prop}
\begin{proof}
By construction $B_{\mathbf{i}}=\mu_{\mathbf{i}}(B)$ is obtained from $B$ by ordinary matrix multiplication, hence the form of $B_{\mathbf{i}}$ follows from Definition \ref{Defn:Cmatrix}. We can show by induction that the upper left part of $\Lambda_{\mathbf{i}}$ is zero. Let $\mathbf{i}'=(i_{r_-1},\ldots,i_1)$. By definition $\Lambda_{\mathbf{i}}=E_{\epsilon}^T\Lambda_{\mathbf{i}'}E_{\epsilon}$, where we use the same notation as Berenstein--Zelevinsky \cite[Equation (3.2)]{BZ}. The sign coherence of the $c$ vectors implies there is a always a sign $\epsilon$ such that $E_{\epsilon}$ is block diagonal, which proves that the upper left part of $\Lambda_{\mathbf{i}}$ is zero. Moreover, the pair $(B_{\mathbf{i}},\Lambda_{\mathbf{i}})$ must satisfy the compatibility condition $B_{\mathbf{i}}^T\Lambda_{\mathbf{i}}=\left(\begin{matrix}I&0\end{matrix}\right)$ so that the lower left part of $\Lambda_{\mathbf{i}}$ is equal to $G_{\mathbf{i}}$ due to the relation $C_{\mathbf{i}}G_{\mathbf{i}}^T=I$. The upper right part of $\Lambda_{\mathbf{i}}$ must be equal to $-G_{\mathbf{i}}^T$ due to skew-symmetry. The structure of the lower right part can be read off from the compatibility condition.
\end{proof}

Suppose that $\Lambda$ is a skew-symmetric integer $m\times m$ matrix. The \emph{based quantum torus} $\QTorus(\Lambda)$ is a $\mathbb{Z}[q^{\pm 1/2}]$-algebra with $\mathbb{Z}[q^{\pm 1/2}]$-basis $\{\, M[a]\mid \mathbb{Z}^m\,\}$ indexed by $\mathbb{Z}^m$; on basis elements the multiplication is defined as
\begin{align}
\label{Eqn:MultiplicationBZBasis}
M[a]M[b]=q^{\frac{1}{2} a^T\Lambda b}M[a+b]
\end{align}
for all $a,b\in\mathbb{Z}^m$.

The based quantum torus $\QTorus(\Lambda)$ is an Ore domain and it is contained in its skew field of fractions $\mathcal{F}$. We refer the reader to \cite{BZ} for details about this construction. Note that $\mathcal{F}$ is an algebra over $\mathbb{Q}(q^{1/2})$. We say that two elements $f_1,f_2\in\mathcal{F}$ are $q$-\emph{commuting} if there exists an integer $k$ such that $f_if_j=q^{k/2}f_jf_i$. Examples of $q$-commuting elements include the basis elements of the based quantum torus: for all $a,b\in\mathbb{Z}^m$ we have $M[a]M[b]=q^{a^T\Lambda b}M[b]M[a]$.

A \emph{quantum cluster} is a tuple $(\x',\y')=(x_1'\ldots,x'_n,y'_1,\ldots,y'_{m-n})$ of $m$ pairwise $q$-commuting elements in $\mathcal{F}$. The elements $x'_i$ with $i\in [n]$ are called \emph{quantum cluster variables} and the elements $y'_i$ with $i\in [1,m-n]$ are called \emph{frozen variables}. The quantum cluster $(\x,\y)=\left(M[e_1],\ldots,M[e_m]\right)$, where $e_i$ denotes the $i$-th standard basis vector of $\mathbb{Z}^m$, is called the \emph{initial quantum cluster}. The exponents arising in the $q$-commutativity relations among the quantum cluster and frozen variables in a quantum cluster $(\x',\y')$ assemble a skew-symmetric matrix $\Lambda'$. We call $\Lambda'$ the $q$-\emph{commutativity matrix} of the quantum cluster. We denote the basis elements in the based quantum torus $\QTorus(\Lambda')$ by $M_{\Lambda'}[a]$ with $a\in\mathbb{Z}^m$. 

A \emph{quantum seed} is a tuple $(B',\Lambda',\x',\y')$ such that $(\x',\y')$ is a quantum cluster with $q$-commutativity matrix $\Lambda'$ and there exists a positive integer $k$ such that $(B')^T\Lambda=\left(\begin{matrix}kI&0\end{matrix}\right)$. The last property is called the \emph{compatibility condition}.

The \emph{mutation} $\mu_k(B',\Lambda',\x',\y')= (B'',\Lambda'',\x'',\y'')$ of the quantum seed  $(B',\Lambda',\x',\y')$ at $k\in [n]$ is constructed as follows. The quantum cluster $(\x'',\y'')$ is obtained from $(\x',\y')$ by replacing $x_k'$ with 
\begin{align*}
x_k'' = M_{\Lambda'}\left(-e_k + \sum_{b_{ki}>0} b_{ik}e_i\right) + M_{\Lambda'}\left(-e_k - \sum_{b_{ik}<0} b_{ik}e_i\right).
\end{align*}  
This cluster consists again of pairwise $q$-commuting elements by the compatibility condition. Then $\Lambda''$ is defined as the $q$-commutativity matrix of $(\x'',\y'')$. Lastly, $B''=\mu_k(B')$ is given by the usual mutation of exchange matrices.

The \emph{quantum cluster algebra} $\A_q(B,\Lambda,\x,\y)$ is the subalgebra of $\mathcal{F}$ generated by all quantum cluster variables and frozen variables in all quantum seeds that can be obtained from $(B,\Lambda,\x,\y)$ by sequences of mutations. We also use the shorthand notation $\A_q(B,\Lambda)$ for $\A_q(B,\Lambda,\x,\y)$.

\section{An expansion formula for quantum cluster algebras of type A}
\label{Sec:TypeA}

\subsection{A quantum expansion formula for type A }

Let $n\geq 1$ be a natural number. We consider the marked oriented surface $(\cals,\M)$ which is formed by a disc $\cals$ with a set $\M$ of $n+3$ marked points on the boundary. The cluster algebra $\A(\Sur,\M)$ has finite type $A_n$. We fix a triangulation $\T$ of $(\cals,\M)$, and we denote by $Q$ the quiver of $\T$. Without loss of generality we may assume that $Q$ is a quiver of type $A_n$.

Let $(B,\Lambda)$ be the principal quantization pair of the signed adjacency matrix of $Q$. We are interested in the quantum cluster algebra $\A_q(B,\Lambda)$. We denote the initial quantum cluster by $(\x,\y)=(x_i,y_i)_{i\in [1,n]}$.

Our goal is to prove the following theorem about the expansion of quantum cluster variables as $\mathbb{Z}[q^{\pm 1/2}]$-linear combinations of quantum Laurent monomials in the initial quantum cluster variables. As it turns out, the coefficients are all equal to $1$.

\begin{theorem}
\label{Thm:ExpansionA}
If $\gamma$ is any arc in $(\Sur,\M)$, then
\begin{align*}
x_{\gamma}=\sum_{\PM\models\SG_{\gamma}} M\left[\nu(\PM)\right]\in \A_q(B,\Lambda).
\end{align*}
\end{theorem}

The proof of Theorem \ref{Thm:ExpansionA} will be given at the end of Section \ref{Sec:TypeA}.

\subsection{Quantum F-polynomials}

The main tool in the proof of the expansion formula is quantum $F$-polynomials. These polynomials are a quantized version of the $F$-polynomials from Fomin--Zelevinsky's fourth cluster algebra article \cite{FZ4}. Quantum $F$-polynomials have been introduced and studied by Tran \cite{T}. Let us recall some notions and results from Tran's article. 

\begin{defn}[$\widehat{y}$-variables]
Suppose $i\in [1,n]$. Let $b_i$ be the $i$-th column of the exchange matrix $B$. We put $\widehat{y_i}=M[b_i]$. Moreover, for every subset $S=\{s_1,\ldots,s_r\}\subseteq [1,n]$ with $s_1<\ldots<s_r$ we set
\begin{align*}
\widehat{y}_S=q^{\frac{1}{2}\sum\limits_{1\leq i<j\leq r} b_{s_j,s_i}} \widehat{y}_{s_1}\cdots \widehat{y}_{s_r}.
\end{align*}
\end{defn}

For two indices $i,j\in [1,n]$ we have $b_i^T\Lambda b_j =-b_{ji}$ so that
\begin{align*}
\widehat{y}_i\widehat{y}_j=M[b_i]M[b_j]=q^{\tfrac12 b_i^T\Lambda b_j}M[b_i+b_j]=q^{-\tfrac12 b_{ji}}M[b_i+b_j]=q^{-b_{ji}}\widehat{y}_j\widehat{y}_i.
\end{align*}
We conclude that $\widehat{y}_S$ is a standard basis element in Berenstein--Zelevinsky's quantum torus. More precisely, if we denote by $b_S=\sum_{s\in S} b_s$ the sum of the column vectors of $B$ indexed by elements of $S$, then $\widehat{y}_S=M[b_S]$. 

We consider an arc $\gamma$ and the associated snake graph $\SG_{\gamma}$. The weights of the tiles of $\SG_{\gamma}$ are given by a subset $T\subseteq Q_0$ such that the full subquiver of $Q$ on the vertex set $T$ is connected.

\begin{defn}[Properties of subsets] Suppose that $S\subseteq T$.
\begin{enumerate}
\item We say that $S$ is \emph{successor-closed} if the following condition holds. If $j\in S$, $i\in T$ and there exists an arrow $j\to i$ in $Q$, then $i\in S$. 
\item We denote by $\Phi(S)$ the number of connected components of the full subquiver of $Q$ on the vertex set $S$. 
\end{enumerate}
\end{defn}

Note that successor-closed subsets of $T$ are in natural bijection with perfect matchings of $\SG_{\gamma}$. More precisely, for every successor-closed subset $S\subseteq T$ there is a unique perfect matching $P$ such that the weights of the twisted tiles of $P$ are equal to $S$.

\begin{ex}
Suppose that $T=\{1,2,3,4,5\}$ and that the full subquiver of $Q$ on the vertex set $T$ is given by the quiver on the left of Figure \ref{Figure:SuccessorClosed}. Then the set $S=\{1,3,4\}$ is successor-closed. In the middle of the figure we display the snake graph and its minimal perfect matching. Twisting the tiles labelled $1,3,4$ yields the perfect matching on the right of the figure.

In this example we have $\lvert S\rvert=3$ and $\Phi(S)=2$. Note that the number of arrows inside $S$ is equal to $1$, which is equal to the difference of the previous two numbers.
\end{ex}

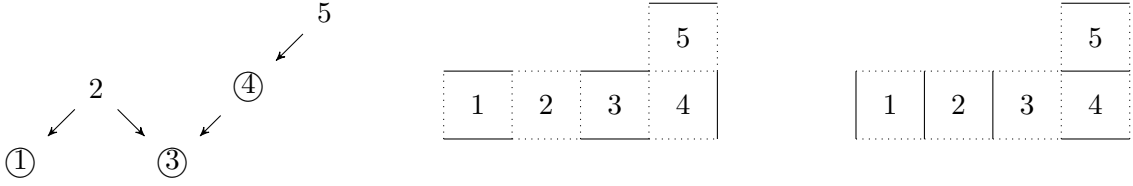
\begin{figure}
\begin{multicols}{3}

\begin{center}
\begin{tikzpicture}
\newcommand{\horizontalsize}{1cm}
\newcommand{\verticalsize}{1cm}

\node (1) at (0,0) {\textcircled{1}};
\node (2) at (\horizontalsize,\verticalsize) {2};
\node (3) at (2*\horizontalsize,0) {\textcircled{3}};
\node (4) at (3*\horizontalsize,\verticalsize) {\textcircled{4}};
\node (5) at (4*\horizontalsize,2*\verticalsize) {5};

\path[draw,->,shorten <=2pt,shorten >=2pt,>=stealth'] (2) to (1);
\path[draw,->,shorten <=2pt,shorten >=2pt,>=stealth'] (2) to (3);
\path[draw,->,shorten <=2pt,shorten >=2pt,>=stealth'] (4) to (3);
\path[draw,->,shorten <=2pt,shorten >=2pt,>=stealth'] (5) to (4);

\end{tikzpicture}
\end{center}

\columnbreak

\begin{center}
\begin{tikzpicture}

\newcommand{\dist}{0.9cm}

\path[draw] (1*\dist,0) edge node {} (2*\dist,0);
\path[draw,dotted] (2*\dist,0) edge node {} (3*\dist,0);
\path[draw] (3*\dist,0) edge node {} (4*\dist,0);
\path[draw,dotted] (4*\dist,0) edge node {} (5*\dist,0);

\path[draw] (1*\dist,\dist) edge node {} (2*\dist,\dist);
\path[draw,dotted] (2*\dist,\dist) edge node {} (3*\dist,\dist);
\path[draw] (3*\dist,\dist) edge node {} (4*\dist,\dist);
\path[draw,dotted] (4*\dist,\dist) edge node {} (5*\dist,\dist);

\path[draw] (4*\dist,2*\dist) edge node {} (5*\dist,2*\dist);

\path[draw,dotted] (1*\dist,0) edge node {} (1*\dist,\dist);
\path[draw,dotted] (2*\dist,0) edge node {} (2*\dist,\dist);
\path[draw,dotted] (3*\dist,0) edge node {} (3*\dist,\dist);
\path[draw,dotted] (4*\dist,0) edge node {} (4*\dist,\dist);
\path[draw] (5*\dist,0) edge node {} (5*\dist,\dist);

\path[draw,dotted] (4*\dist,\dist) edge node {} (4*\dist,2*\dist);
\path[draw,dotted] (5*\dist,\dist) edge node {} (5*\dist,2*\dist);

\node (T1) at (1.5*\dist,0.5*\dist) {$1$};
\node (T2) at (2.5*\dist,0.5*\dist) {$2$};
\node (T3) at (3.5*\dist,0.5*\dist) {$3$};
\node (T4) at (4.5*\dist,0.5*\dist) {$4$};
\node (T5) at (4.5*\dist,1.5*\dist) {$5$};

\end{tikzpicture}
\end{center}

\columnbreak

\begin{center}
\begin{tikzpicture}

\newcommand{\dist}{0.9cm}

\path[draw,dotted] (1*\dist,0) edge node {} (2*\dist,0);
\path[draw,dotted] (2*\dist,0) edge node {} (3*\dist,0);
\path[draw,dotted] (3*\dist,0) edge node {} (4*\dist,0);
\path[draw] (4*\dist,0) edge node {} (5*\dist,0);

\path[draw,dotted] (1*\dist,\dist) edge node {} (2*\dist,\dist);
\path[draw,dotted] (2*\dist,\dist) edge node {} (3*\dist,\dist);
\path[draw,dotted] (3*\dist,\dist) edge node {} (4*\dist,\dist);
\path[draw] (4*\dist,\dist) edge node {} (5*\dist,\dist);

\path[draw] (4*\dist,2*\dist) edge node {} (5*\dist,2*\dist);

\path[draw] (1*\dist,0) edge node {} (1*\dist,\dist);
\path[draw] (2*\dist,0) edge node {} (2*\dist,\dist);
\path[draw] (3*\dist,0) edge node {} (3*\dist,\dist);
\path[draw,dotted] (4*\dist,0) edge node {} (4*\dist,\dist);
\path[draw,dotted] (5*\dist,0) edge node {} (5*\dist,\dist);

\path[draw,dotted] (4*\dist,\dist) edge node {} (4*\dist,2*\dist);
\path[draw,dotted] (5*\dist,\dist) edge node {} (5*\dist,2*\dist);

\node (T1) at (1.5*\dist,0.5*\dist) {$1$};
\node (T2) at (2.5*\dist,0.5*\dist) {$2$};
\node (T3) at (3.5*\dist,0.5*\dist) {$3$};
\node (T4) at (4.5*\dist,0.5*\dist) {$4$};
\node (T5) at (4.5*\dist,1.5*\dist) {$5$};

\end{tikzpicture}
\end{center}

\end{multicols}

\caption{A successor-closed subset and the corresponding perfect matching}
\label{Figure:SuccessorClosed}

\end{figure}

We can generalize the observation in the following way. 

\begin{prop}
\label{Prop:Successor}
Suppose that $S\subseteq T$ is successor-closed. Then
\begin{align*}
\lvert S\rvert-\vert \left\lbrace\, (k,j)\in S\times T \mid (k,j)\in Q_1\,\right\rbrace\rvert = \Phi(S).
\end{align*}
\end{prop}

\begin{proof}
The full subquiver of $Q$ on the vertex set $S$ is a (directed) forest, i.e. a disjoint union of (directed) trees. In every connected component the difference between the number of vertices and the number of arrows is equal to $1$.  
\end{proof}

Recall that $g_{\gamma}$ denotes the $g$-vector of the cluster variable $x_{\gamma}$. Tran, see \cite[Proposition 7.3]{T}, shows that the entry of the $g$-vector at an index $s\in Q_0$ is equal to
\begin{align}
\label{Eqn:Trangvector}
(g_{\gamma})_s=\lvert\left\lbrace\,  j\in T\mid \exists (s\to j)\in Q_1 \,\right\rbrace\rvert-1.
\end{align}
 In the next statement we view the $g$-vector as an element in $\mathbb{Z}^{2n}$ via the canonical inclusion $\mathbb{Z}^{n} \hookrightarrow \mathbb{Z}^{2n}$.

\begin{theorem}[Tran \cite{T}, Theorems 5.3, 7.4]
\label{Thm:Tran}
If we put
\begin{align*}
F_{\gamma}=\sum_{S\subseteq T} q^{\frac{1}{2} \Phi(S)} \widehat{y}_S
\end{align*}
where the sum runs over all successor-closed subsets $S$ of $T$, then
\begin{align*}
x_{\gamma}=F_{\gamma}M[g_{\gamma}].
\end{align*}
\end{theorem}

The element $F_{\gamma}$ is called the \emph{quantum $F$-polynomial}.

\begin{proof}[Proof of Theorem \ref{Thm:ExpansionA}] We apply Tran's Theorem \ref{Thm:Tran}. Using $\widehat{y}_S=M[b_S]$, where $b_S$ is the sum of the column vectors of $B$ indexed by $S$, we obtain
\begin{align*}
x_{\gamma}=\sum_{S} q^{\tfrac12 \Phi(S)} M[b_S]M[g_{\gamma}]=\sum_{S} q^{\tfrac12 \Phi(S)+\tfrac12 b_S^T \Lambda g_{\gamma}} M[b_s+g_{\gamma}]. 
\end{align*}
We show that $ \Phi(S)+ b_S^T \Lambda g_{\gamma}$ is zero for every successor-closed subset $S\subseteq T$. This claim completes the proof because it implies that every coefficient in the expansion of $x_{\gamma}$ as a linear combination of standard basis elements is equal to $1$. Recall that the last $n$ entries of $g_{\gamma}$ are $0$ by convention. This implies $b_i^T\Lambda g_{\gamma}=(g_{\gamma})_i$ for every $i\in S$. We use equation (\ref{Eqn:Trangvector}), sum the expression over all $i\in S$ and apply Proposition \ref{Prop:Successor} to obtain the result.
\end{proof}

\section{An expansion formula for the Kronecker quantum cluster algebra}
\label{Section:Kronecker}

\subsection{The setup}

We consider the marked oriented surface $(\Sur,\M)$ which is formed by an annulus $\Sur$ and a set $\M\subseteq \partial \Sur$ of two marked points, one on each boundary component. Figure \ref{Figure:InitialSeed} shows a triangulation $\T$ of $(\Sur,\M)$ by two arcs $\tau_1$, $\tau_2$. The quiver of $\T$ is the Kronecker quiver $1\rightrightarrows2$. The quiver of its principal extension, denoted $Q$, is also shown in Figure \ref{Figure:InitialSeed}. Note that $Q$ is an ice quiver with two mutable vertices $1,2$ and two frozen vertices $1',2'$. 

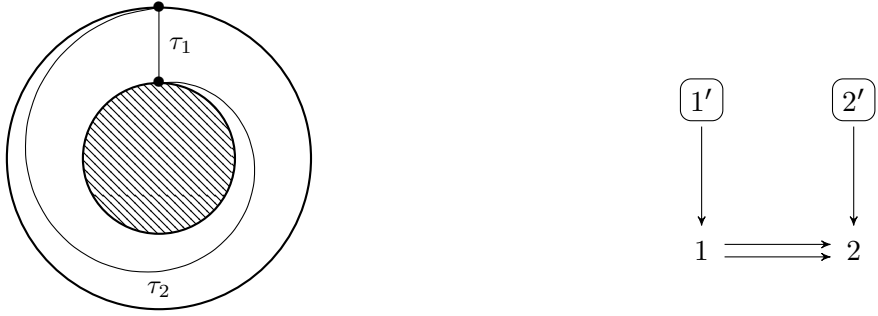
\begin{figure}

\begin{multicols}{2}

\begin{center}
\begin{tikzpicture}

\newcommand{\RadiusSmallCircle}{1cm}
\newcommand{\RadiusLargeCircle}{2cm}
\newcommand{\AngleInnerPoint}{90} 
\newcommand{\AngleOuterPoint}{90} 

\draw[pattern=north west lines,thick] (0,0) circle (\RadiusSmallCircle);
\draw (\AngleInnerPoint:\RadiusSmallCircle) node {$\bullet$}; 

\draw[thick] (0,0) circle (\RadiusLargeCircle);
\draw (\AngleOuterPoint:\RadiusLargeCircle) node {$\bullet$}; 

\draw[domain=0:1,smooth,variable=\t] plot (\AngleInnerPoint-\t*\AngleInnerPoint+\t*\AngleOuterPoint:\t*\RadiusLargeCircle-\t*\RadiusSmallCircle+\RadiusSmallCircle);
\node[right] (1) at (0.5*\AngleInnerPoint+0.5*\AngleOuterPoint:0.5*\RadiusLargeCircle+0.5*\RadiusSmallCircle) {\small{$\tau_1$}};

\draw[domain=0:1,smooth,variable=\t] plot (\AngleInnerPoint-\t*\AngleInnerPoint-\t*360+\t*\AngleOuterPoint:\t*\RadiusLargeCircle-\t*\RadiusSmallCircle+\RadiusSmallCircle);
\node[below] (2) at (0.5*\AngleInnerPoint-180+0.5*\AngleOuterPoint:0.5*\RadiusLargeCircle+0.5*\RadiusSmallCircle) {\small{$\tau_2$}};

\end{tikzpicture}
\end{center}

\columnbreak

\null \vfill
\begin{center}
\begin{tikzpicture}

\newcommand{\distance}{2}

\node (1) at (0,0) {$1$};
\node (2) at (\distance,0) {$2$};

\node[draw,rectangle,rounded corners] (3) at (0,\distance) {$1'$};
\node[draw,rectangle,rounded corners] (4) at (\distance,\distance) {$2'$};

\path[draw,->,shorten <=2pt,shorten >=2pt,>=stealth'] (3) to (1);
\path[draw,->,shorten <=2pt,shorten >=2pt,>=stealth'] (4) to (2);

\path[draw,->,shorten <=2pt,shorten >=2pt,>=stealth',transform canvas={yshift=0.5ex}] (1) to (2);
\path[draw,->,shorten <=2pt,shorten >=2pt,>=stealth',transform canvas={yshift=-0.5ex}] (1) to (2);

\end{tikzpicture}
\end{center}
\null \vfill

\end{multicols}

\caption{A triangulation of an annulus and its associated quiver}
\label{Figure:InitialSeed}
\end{figure}

The triangulation gives rise to a seed of the associated cluster algebra $\A(\x,\y,B)$. The set of cluster variables of this cluster algebra admits a natural parametrization by the set of integers $\mathbb{Z}$, see Figure \ref{Figure:Clusters}. Hence the set of cluster variables of $\A(\x,\y,B)$ can be written as $\{\,x_n\mid n\in\mathbb{Z}\,\}$; in this notation the exchange relations become $x_{n-1}x_{n+1}=x_n^2+y_1^{n-1}y_2^{n-2}$ for $n\geq 2$ and equations for $n\leq 1$ can be written down similarly.

\begin{figure}

\begin{center}
\begin{tikzpicture}

\newcommand{\hdist}{2cm} 
\newcommand{\vdist}{1cm} 

\node (-1) at (-2*\hdist,0) {$(x_{-1},x_0,y_1,y_2)$};
\node (0) at (-\hdist,\vdist) {$(x_0,x_1,y_1,y_2)$};
\node (1) at (0,0) {$(x_1,x_2,y_1,y_2)$};
\node (2) at (\hdist,\vdist) {$(x_2,x_3,y_1,y_2)$};
\node (3) at (2*\hdist,0) {$(x_3,x_4,y_1,y_2)$};
\node (4) at (3*\hdist,\vdist) {$(x_4,x_5,y_1,y_2)$};

\path[draw,-] (-1) to (0);
\path[draw,-] (0) to (1);
\path[draw,-] (1) to (2);
\path[draw,-] (2) to (3);
\path[draw,-] (3) to (4);

\node (LeftDots) at (-3*\hdist,0.5*\vdist) {$\hdots$};
\node (RightDots) at (4*\hdist,0.5*\vdist) {$\hdots$};

\end{tikzpicture}
\end{center}

\caption{The exchange graph of the Kronecker cluster algebra}
\label{Figure:Clusters}
\end{figure}
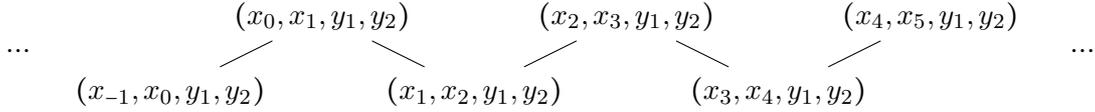

Fomin--Zelevinsky's Laurent phenomenon asserts that $x_n\in\mathbb{Z}[x_1^{\pm 1},x_2^{\pm 1},y_1,y_2]$ for every $n\in\mathbb{Z}$. There are two different explicit formulae to write $x_n$ as a Laurent polynomial in $\x$. The first formula we state is a variation of a theorem by Caldero--Zelevinsky \cite[Theorem 4.1]{CZ}. The authors establish their formula by computing Euler characteristics of quiver Grassmannians  and by using Caldero--Chapoton's formula \cite{CC}. Later Zelevinsky \cite{Z} gave a simple inductive proof of the formula.

\begin{theorem}[cf. Caldero--Zelevinsky]
\label{Thm:BinomialCoefficientFormula}
For every $n\geq 0$ we have
\begin{align*}
x_{n+3}=x_1^{-n-1}x_2^{n+2}+\sum_{p+r\leq n}\binom{n-r}{p}\binom{n+1-p}{r}x_1^{2p-n-1}x_2^{2r-n}y_1^{n+1-r}y_2^p.
\end{align*}
\end{theorem}

 It is easy to see that the cluster variable $x_{n+3}$ is attached to an arc $\gamma$ in $(\Sur,\M)$ that crosses $\tau_1$ exactly $n+1$ times and crosses $\tau_2$ exactly $n$ times. Moreover, it can be seen that the snake graph $\Gn$ associated to $\gamma$ is given as follows.

\begin{defn}[Snake graphs of Kronecker type] 
\label{Defn:KroneckerSnake}
The snake graph $\Gn$ is the following straight snake graph consisting of $2n+1$ tiles of alternating face weight $1$ and $2$ and with exactly $n+1$ tiles of weight $1$ and exactly $n$ tiles of weight $2$; the top and bottom edges of the tile of face weight $1$ have edge weights $2$ and the tile of face weight $2$ has edge weights $1$:
\begin{center}
\begin{tikzpicture}

\newcommand{\dist}{1cm} 

\node (-1down) at (-1*\dist,0) {$\bullet$};
\node (-2down) at (-2*\dist,0) {$\bullet$};
\node (-3down) at (-3*\dist,0) {$\bullet$};
\node (-4down) at (-4*\dist,0) {$\bullet$};
\node (-5down) at (-5*\dist,0) {$\bullet$};
\node (-6down) at (-6*\dist,0) {$\bullet$};

\node (-1up) at (-1*\dist,\dist) {$\bullet$};
\node (-2up) at (-2*\dist,\dist) {$\bullet$};
\node (-3up) at (-3*\dist,\dist) {$\bullet$};
\node (-4up) at (-4*\dist,\dist) {$\bullet$};
\node (-5up) at (-5*\dist,\dist) {$\bullet$};
\node (-6up) at (-6*\dist,\dist) {$\bullet$};

\path[draw,-,above] (-1*\dist,\dist) edge node {\tiny{$2$}} (-2*\dist,\dist);
\path[draw,-,above] (-2*\dist,\dist) edge node {\tiny{$1$}} (-3*\dist,\dist);
\path[draw,-,above] (-3*\dist,\dist) edge node {\tiny{$2$}} (-4*\dist,\dist);
\path[draw,-,above] (-4*\dist,\dist) edge node {\tiny{$1$}} (-5*\dist,\dist);
\path[draw,-,above] (-5*\dist,\dist) edge node {\tiny{$2$}} (-6*\dist,\dist);

\path[draw,-,below] (-1*\dist,0) edge node {\tiny{$2$}} (-2*\dist,0);
\path[draw,-,below] (-2*\dist,0) edge node {\tiny{$1$}} (-3*\dist,0);
\path[draw,-,below] (-3*\dist,0) edge node {\tiny{$2$}} (-4*\dist,0);
\path[draw,-,below] (-4*\dist,0) edge node {\tiny{$1$}} (-5*\dist,0);
\path[draw,-,below] (-5*\dist,0) edge node {\tiny{$2$}} (-6*\dist,0);

\path[draw,-] (-1*\dist,0) edge node {} (-1*\dist,\dist);
\path[draw,-] (-2*\dist,0) edge node {} (-2*\dist,\dist);
\path[draw,-] (-3*\dist,0) edge node {} (-3*\dist,\dist);
\path[draw,-] (-4*\dist,0) edge node {} (-4*\dist,\dist);
\path[draw,-] (-5*\dist,0) edge node {} (-5*\dist,\dist);
\path[draw,-] (-6*\dist,0) edge node {} (-6*\dist,\dist);

\node (1down) at (1*\dist,0) {$\bullet$};
\node (2down) at (2*\dist,0) {$\bullet$};
\node (3down) at (3*\dist,0) {$\bullet$};

\node (1up) at (1*\dist,\dist) {$\bullet$};
\node (2up) at (2*\dist,\dist) {$\bullet$};
\node (3up) at (3*\dist,\dist) {$\bullet$};

\path[draw,-,above] (1*\dist,\dist) edge node {\tiny{$1$}} (2*\dist,\dist);
\path[draw,-,above] (2*\dist,\dist) edge node {\tiny{$2$}} (3*\dist,\dist);

\path[draw,-,below] (1*\dist,0) edge node {\tiny{$1$}} (2*\dist,0);
\path[draw,-,below] (2*\dist,0) edge node {\tiny{$2$}} (3*\dist,0);

\path[draw,-] (1*\dist,0) edge node {} (1*\dist,\dist);
\path[draw,-] (2*\dist,0) edge node {} (2*\dist,\dist);
\path[draw,-] (3*\dist,0) edge node {} (3*\dist,\dist);

\node (Dots) at (0,0.5*\dist) {$\hdots$};

\node (-T5) at (-5.5*\dist,0.5*\dist) {$1$};
\node (-T4) at (-4.5*\dist,0.5*\dist) {$2$};
\node (-T3) at (-3.5*\dist,0.5*\dist) {$1$};
\node (-T2) at (-2.5*\dist,0.5*\dist) {$2$};
\node (-T1) at (-1.5*\dist,0.5*\dist) {$1$};

\node (T2) at (2.5*\dist,0.5*\dist) {$1$};
\node (T1) at (1.5*\dist,0.5*\dist) {$2$};

\end{tikzpicture}
\end{center}
For some discussion which will become clear later, we also need the snake graph $\Hn$, which is obtained from $\Gn$ by removing the last tile (with weight $1$). Note that $\Hn$ contains exactly $n$ tiles with face weight $1$ and exactly $n$ tiles with face weight $2$. 
\end{defn}

The perfect matching $\PM_{min}$ of $\Gn$ is formed by all the edges of weight $1$ and its weight monomial is equal to $x(\PM_{min})=x_2^{2n+2}$. The following statement is a precedent of Theorem \ref{Thm:MSWExpansion} due to \cite{MP,MS}.

\begin{theorem}[Musiker--Propp, Musiker--Schiffler]
\label{Thm:SnakeGraphFormula}
Let $n\geq 0$. Then we have
\begin{align*}
x_{n+3}=\frac{1}{x_1^{n+1}x_2^n}\left(\sum_{\PM\models\Gn}x(\PM)y(\PM)\right).
\end{align*}
\end{theorem}

\begin{rem} Suppose that $n \geq 0$. The cluster variable $x_{-n}$ corresponds to an arc in $(\mathcal{S},M)$ that crosses $\tau_1$ exactly $n$ times and crosses $\tau_2$ exactly $n+1$ times. Its snake graph is obtained from $\mathcal{G}_n$ by reversing the roles of the indices $1$ and $2$. There are formulae for $x_{-n}$ analogous to Theorem~\ref{Thm:BinomialCoefficientFormula} and \ref{Thm:SnakeGraphFormula} and all other statements made later in the text. For better readibility we refrain from writing down statements for both $x_{-n}$ and $x_{n+3}$ and focus on (quantum) cluster variables with positive indices throughout the text.
\end{rem}

In the following we abbreviate $\operatorname{Twist}(\PM)_{\tau_i}$ by $\operatorname{Twist}(\PM)_i$ for $i\in \{1,2\}$.

\begin{defn}[Level sets] Let $p,r,n$ be natural numbers. The \emph{level set} $\Match(\Gn)_{p,r}$ is the set of perfect matchings $\PM$ of $\Gn$ with $\lvert\operatorname{Twist}(\PM)_1\rvert=n+1-r$ and $\lvert\operatorname{Twist}(\PM)_2\rvert=p$. (In other words, $\Match(\Gn)_{p,r}$ is the set of perfect matchings obtained from $\PM_{min}$ by twisting $n+1-r$ tiles of weight $1$ and $p$ tiles of weight $2$.) In this case we also write $\PM\models_p^r\Gn$. Similarly, we let $\Match(\Hn)_{p,r}$ be the set of perfect matchings $\PM$ of $\Hn$ with $\lvert\operatorname{Twist}(\PM)_1\rvert=n-r$ and $\lvert\operatorname{Twist}(\PM)_2\rvert=p$. In this case we also write $\PM\models_p^r\Hn$. 
\end{defn}

\begin{rem}
\label{Rem:NumberOf Matchings}
Let $p,r,n$ be natural numbers.
\begin{itemize}
\item[(a)] Let $\PM\in\Match(\Gn)\backslash\{\PM_{min}\}$. If a tile $G\in\operatorname{Twist}(\PM)$ has face weight $2$, then its neighboring tiles $G'$ and $G''$ must belong to $\operatorname{Twist}(\PM)$ as well. In particular, we must have $\lvert\operatorname{Twist}(\PM)_1\rvert\geq \lvert\operatorname{Twist}(\PM)_2\rvert+1$. It follows that $\Match(\Gn)_{p,r}=\varnothing$ unless $p+r\leq n$ or $(p,r)=(0,n+1)$.
\item[(b)] A perfect matching satisfies $\PM\models_p^r\Gn$ if and only if $y(\PM)=y_1^{n+1-r}y_2^p$.
\item[(c)] Combining Theorems \ref{Thm:BinomialCoefficientFormula} and \ref{Thm:SnakeGraphFormula} we see that $\lvert\Match(\Gn)_{p,r}\rvert=\binom{n-r}{p}\binom{n+1-p}{r}$ if $p+r\leq n$. Note that there is a direct combinatorial proof of this identity by Musiker--Propp \cite[Section 2.1]{MP}. The same authors also show that $\lvert\Match(\Hn)_{p,r}\rvert=\binom{n-r}{p}\binom{n-p}{r}$ if $p+r\leq n$.
\end{itemize}
\end{rem}

\subsection{The quantum cluster algebra of Kronecker type}

We consider the quantum cluster algebra $\A_q(B,\Lambda)$ constructed from the principal quantization pair
\begin{align*}
B=\left(\begin{matrix}
\widetilde{B}\\I
\end{matrix}\right)
=\left(\begin{matrix}
0&2\\
-2&0\\
1&0\\
0&1
\end{matrix}\right),&&
\Lambda=\left(\begin{matrix}
0&-I\\
I&-\widetilde{B}
\end{matrix}\right)
=\left(\begin{matrix}
0&0&-1&0\\
0&0&0&-1\\
1&0&0&-2\\
0&1&2&0
\end{matrix}\right).
\end{align*}
The initial seed of $\A_q(B,\Lambda)$ is denoted by $(\x,\y,B,\Lambda)$. In particular, we have $x_1x_2=x_2x_1$, $y_1y_2=q^{-2}y_2y_1$ and $x_iy_i=q^{-1}y_ix_i$ for $i\in\{1,2\}$.

By a slight abuse of notation we denote the quantum cluster variables of $\Aq(B,\Lambda)$ again by $x_n$ with $n\in\mathbb{Z}$. By equation (\ref{Eqn:gvector}) the $g$-vector of $x_{n+3}$ is
\begin{align}
\label{Eqn:gvectorexplicit}
g(x_{n+3})=\operatorname{deg}(x(\PM_{min}))-\operatorname{deg}(\operatorname{cross}(\T,\gamma))=\left(\begin{matrix}0\\2n+2\end{matrix}\right)-\left(\begin{matrix}n+1\\n\end{matrix}\right)=\left(\begin{matrix}-n-1\\n+2\end{matrix}\right)
\end{align}
where $\gamma$ is the arc corresponding to $x_{n+3}$. From this we can deduce that the $G$- and $C$-matrix for the cluster $(x_{n+3},x_{n+2},y_1,y_2)$ are
\begin{align*}
G=\left(\begin{matrix}-n-1&-n\\n+2&n+1\end{matrix}\right),&&C=G^{-T}=\left(\begin{matrix}-n-1&n+2\\-n&n+1\end{matrix}\right).
\end{align*}
So according to Proposition \ref{Prop:GMatrix} we obtain the $q$-commutativity relations $x_{n+3}y_1=q^{n+1}y_1x_{n+3}$, $x_{n+3}y_2=q^{-n-2}y_2x_{n+3}$, and $y_1y_2=q^{-2}y_2y_1$.
These relations imply that
\begin{align*}
x_{n+2}^{-1}y_1^{n+2}y_2^{n+1}=q^{1-2(n+2)(n+1)}y_2^{n+1}y_1^{n+2}x_{n+2}^{-1}. 
\end{align*}
Hence the exchange relations become 
\begin{align*}
x_{n+2}x_{n+4}=x_{n+3}^2+q^{-1/2+(n+2)(n+1)}y_1^{n+2}y_2^{n+1} \quad (n\geq 0),\\
x_{n+4}x_{n+2}=x_{n+3}^2+q^{1/2+(n+2)(n+1)}y_1^{n+2}y_2^{n+1} \quad (n\geq 0).
\end{align*}

\subsection{An expansion formula via quantum binomial coefficients}

Let us recall some definitions from quantum algebra. Given two natural numbers $k,n$ with $k\leq n$. The \emph{quantum integer}, the \emph{quantum factorial} and the \emph{quantum binomial coefficient} are defined as\begin{align*}
[n]_q=\frac{q^{n/2}-q^{-n/2}}{q^{1/2}-q^{-1/2}},&&[n]_q!=\prod_{k=1}^{n}[k]_q,&&\left[n \atop k\right]_q=\frac{[n]_q!}{[k]_q![n-k]_q!}&&\in\mathbb{Q}(q^{\pm 1/2}).
\end{align*}
For example, $[0]_q=0$, $[1]_q=1$ and $[2]_q=q^{1/2}+q^{-1/2}$. Using the geometric series we may write a quantum integer as $[n]_q=\sum_{k=0}^{n-1}q^{(2k+1-n)/2}\in\mathbb{Z}[q^{\pm 1/2}]$. More generally, it is well-known that $\left[n \atop k\right]_q$ also lies in the smaller ring $\mathbb{Z}[q^{\pm 1/2}]$.

For all $n,k\geq 0$ a quantum version of Pascal's rule asserts that 
\begin{align}
\label{Eqn:QuantumPascal}
\left[n \atop k\right]_q=q^{-\frac{n-k}{2}}\left[n-1 \atop k-1\right]_q+q^{\frac{k}{2}}\left[n-1 \atop k\right]_q.
\end{align}

Moreover, quantum integers, factorials and binomial coefficients specialize to the ordinary integers, factorials and binomial coefficients, respectively, in the limit $q=1$.

The following theorem is a generalization of Theorem \ref{Thm:BinomialCoefficientFormula} to the quantum cluster algebra $\Aq(B,\Lambda)$. For related formulae for the quantum cluster variables in quantum cluster algebras of Kronecker type with different quantizations see Rupel \cite[Proposition 1.1]{R} and Lampe \cite[Theorem 5.3]{L}. Sz\'{a}nt\'{o} \cite[Theorem 4.1]{S} provides another quantization of Caldero--Zelevinsky's theorem by counting cardinalities of Kronecker quiver Grassmannians over finite fields. 

\begin{theorem}
\label{Thm:QuantumBinomial}
For $n\geq 0$ the following relation holds:
\begin{align*}
x_{n+3}=x_1^{-n-1}x_2^{n+2}+\sum_{p+r\leq n}\left[n-r\atop p\right]_q\left[n+1-p\atop r\right]_q M\left[2p-n-1,2r-n,n+1-r,p\right].
\end{align*}
\end{theorem}

We will sketch a proof of the theorem later in this section. First, let us introduce some notation.

\begin{defn}[Quantum loop element]
\label{Def:s1}
Put 
\begin{align*}
s_1=\sum_{P\models\SH_1} M\left[v(P)\right].
\end{align*}
\end{defn}
\begin{rem} \begin{itemize}
\item[(a)] The snake graph $\SH_1$ has exactly $3$ perfect matchings, see Figure \ref{Figure:H1}. Hence we can write explicitly $s_1=M[-1,1,0,0]+M[-1,-1,1,0]+M[1,-1,1,1]$.
\item[(b)] The element belongs to the quantum cluster algebra. More precisely,
\begin{align*}
s_1&=M[1,-1,1,1]+M[-1,-1,1,0]+M[-1,1,0,0]+qM[1,1,0,1]-qM[1,1,0,1]\\
&=\left(M[2,-1,0,1]+M[0,-1,0,0]\right)\left(M[-1,0,1,0]+M[-1,2,0,0]\right)-qM[1,1,0,1]\\
&=x_0x_3-q^{1/2}x_1x_2y_2\in \Aq(B,\Lambda).
\end{align*} 
\item[(c)] The name $s_1$ is chosen in accordance with the name of the corresponding non-quantized element in the classical cluster algebra $\A(\x,\y,B)$, see Zelevinsky \cite{Z}. This element plays a crucial role in the construction of bases of $\A(\x,\y,B)$. It is associated with the loop inside the annulus.
\end{itemize}
\end{rem}

\begin{figure}
\begin{center}
\begin{tikzpicture}

\newcommand{\dist}{1cm} 

\node (1down) at (1*\dist,0) {$\bullet$};
\node (2down) at (2*\dist,0) {$\bullet$};
\node (3down) at (3*\dist,0) {$\bullet$};

\node (1up) at (1*\dist,\dist) {$\bullet$};
\node (2up) at (2*\dist,\dist) {$\bullet$};
\node (3up) at (3*\dist,\dist) {$\bullet$};

\path[draw,dashed,above] (1*\dist,\dist) edge node {\tiny{$1$}} (2*\dist,\dist);
\path[draw,thick,above] (2*\dist,\dist) edge node {\tiny{$2$}} (3*\dist,\dist);

\path[draw,dashed,below] (1*\dist,0) edge node {\tiny{$1$}} (2*\dist,0);
\path[draw,thick,below] (2*\dist,0) edge node {\tiny{$2$}} (3*\dist,0);

\path[draw,-,thick] (1*\dist,0) edge node {} (1*\dist,\dist);
\path[draw,dashed] (2*\dist,0) edge node {} (2*\dist,\dist);
\path[draw,dashed] (3*\dist,0) edge node {} (3*\dist,\dist);

\node (Dots) at (0,0.5*\dist) {$P_{min}$};

\node (T2) at (2.5*\dist,0.5*\dist) {$1$};
\node (T1) at (1.5*\dist,0.5*\dist) {$2$};

\end{tikzpicture}\hspace{1cm}
\begin{tikzpicture}

\newcommand{\dist}{1cm} 

\node (1down) at (1*\dist,0) {$\bullet$};
\node (2down) at (2*\dist,0) {$\bullet$};
\node (3down) at (3*\dist,0) {$\bullet$};

\node (1up) at (1*\dist,\dist) {$\bullet$};
\node (2up) at (2*\dist,\dist) {$\bullet$};
\node (3up) at (3*\dist,\dist) {$\bullet$};

\path[draw,dashed,above] (1*\dist,\dist) edge node {\tiny{$1$}} (2*\dist,\dist);
\path[draw,dashed,above] (2*\dist,\dist) edge node {\tiny{$2$}} (3*\dist,\dist);

\path[draw,dashed,below] (1*\dist,0) edge node {\tiny{$1$}} (2*\dist,0);
\path[draw,dashed,below] (2*\dist,0) edge node {\tiny{$2$}} (3*\dist,0);

\path[draw,-,thick] (1*\dist,0) edge node {} (1*\dist,\dist);
\path[draw,-,thick] (2*\dist,0) edge node {} (2*\dist,\dist);
\path[draw,-,thick] (3*\dist,0) edge node {} (3*\dist,\dist);

\node (Dots) at (0,0.5*\dist) {$P_{med}$};

\node (T2) at (2.5*\dist,0.5*\dist) {$1$};
\node (T1) at (1.5*\dist,0.5*\dist) {$2$};

\end{tikzpicture}\hspace{1cm}
\begin{tikzpicture}

\newcommand{\dist}{1cm} 

\node (1down) at (1*\dist,0) {$\bullet$};
\node (2down) at (2*\dist,0) {$\bullet$};
\node (3down) at (3*\dist,0) {$\bullet$};

\node (1up) at (1*\dist,\dist) {$\bullet$};
\node (2up) at (2*\dist,\dist) {$\bullet$};
\node (3up) at (3*\dist,\dist) {$\bullet$};

\path[draw,-,thick,above] (1*\dist,\dist) edge node {\tiny{$1$}} (2*\dist,\dist);
\path[draw,dashed,above] (2*\dist,\dist) edge node {\tiny{$2$}} (3*\dist,\dist);

\path[draw,-,thick,below] (1*\dist,0) edge node {\tiny{$1$}} (2*\dist,0);
\path[draw,dashed,below] (2*\dist,0) edge node {\tiny{$2$}} (3*\dist,0);

\path[draw,dashed] (1*\dist,0) edge node {} (1*\dist,\dist);
\path[draw,dashed] (2*\dist,0) edge node {} (2*\dist,\dist);
\path[draw,-,thick] (3*\dist,0) edge node {} (3*\dist,\dist);

\node (Dots) at (0,0.5*\dist) {$P_{max}$};

\node (T2) at (2.5*\dist,0.5*\dist) {$1$};
\node (T1) at (1.5*\dist,0.5*\dist) {$2$};

\end{tikzpicture}
\end{center}
\caption{The perfect matchings of $\SH_1$}
\label{Figure:H1}
\end{figure}

\begin{lemma}
\label{Lemma:Recursion}
The equation $x_ns_1=x_{n+1}+q x_{n-1}y_1y_2$ holds for every $n\geq 2$.
\end{lemma}

\begin{proof}
We prove the lemma by induction on $n$. Suppose that $n=2$. By definition we have 
\begin{align*}
x_2s_1&=x_2\left(M[-1,1,0,0]+M[-1,-1,1,0]+M[1,-1,1,1]\right)\\
&=x_1^{-1}x_2^2+q^{-1/2}x_1^{-1}y_1+q x_1y_1y_2=x_3+q x_1y_1y_2. 
\end{align*}
For the induction step it is enough to show that for every $n\geq 3$,
\begin{align*}
x_n^{-1}x_{n+1}+q x_n^{-1}x_{n-1}y_1y_2&=x_{n-1}^{-1}x_n+q x_{n-1}^{-1}x_{n-2}y_1y_2\\
\Leftrightarrow x_{n-1}x_{n+1}+q x_{n-1}^2y_1y_2&=x_n^2+q x_{n}x_{n-2}y_1y_2.
\end{align*}
By the exchange relations the last equation is equivalent to
\begin{align*}
x_n^2+q^{-1/2+(n-1)(n-2)} y_1^{n-1}y_2^{n-2}+q x_{n-1}^2y_1y_2&=x_n^2+q \left(x_{n-1}^2+q^{1/2+(n-2)(n-3)} y_1^{n-2}y_2^{n-3}\right) y_1y_2\\
\Leftrightarrow q^{-1/2+(n-1)(n-2)} y_1^{n-1}y_2^{n-2}&=q q^{1/2+(n-2)(n-3)} q^{2(n-3)} y_1^{n-1}y_2^{n-2}.
\end{align*}
This equation is true so that we have proved the lemma.
\end{proof}

\begin{proof}[Sketch of the proof of Theorem \ref{Thm:QuantumBinomial}] Using Lemma \ref{Lemma:Recursion} we can proceed in a similar way as in the proof of the closely related formula from \cite[Theorem 5.3]{L}.
\end{proof}

\begin{defn}[Quantum coefficients]
\label{Def:QCoeffs}
For natural numbers $p,r,n$ with $p+r\leq n$ we denote the coefficients in Theorem \ref{Thm:QuantumBinomial} by 
\begin{align*}
c_{p,r,n}=\left[n-r\atop p\right]_q\left[n+1-p\atop r\right]_q.
\end{align*}
Moreover we set $c_{0,n+1,n}=1$ and $c_{p,r,n}=0$ otherwise. Furthermore, for $p+r\leq n$ we put
\begin{align*}
d_{p,r,n}=\left[n-r\atop p\right]_q\left[n-p\atop r\right]_q
\end{align*}
and set $d_{p,r,n}=0$ for $p+r>n$.
\end{defn}

\begin{rem}
The ring homomorphism $\mathbb{Z}[q^{\pm 1/2}]\to \mathbb{Z}[q^{\pm 1/2}]$ defined by $q^{1/2}\mapsto q^{-1/2}$ is called bar involution. It is usually denoted by $\overline{(\cdot)}$. Theorem \ref{Thm:QuantumBinomial} implies that the coefficients in the quantum Laurent expansion with respect to Berenstein--Zelevinsky's basis elements are bar invariant, that is, $\overline{c_{p,r,n}}=c_{p,r,n}$ for all $p,r,n$. For a related result about $F$-polynomials see Tran \cite[Corollary 6.5]{T}. For related results about quantum cluster varieties see Allegretti--Kim \cite[Theorem 1.2 (4)]{AK} and Allegretti \cite[Theorem 4.7 (3)]{A}.
\end{rem}

\subsection{Expansion formulae via perfect matchings}

\begin{notation}
Let us denote the $2n+1$ tiles of the snake graph $\Gn$ in Definition \ref{Defn:KroneckerSnake} by 
\begin{align*}
G_{-n}, G_{-(n-1)},\ldots, G_{n-1}, G_n
\end{align*}
from left to right. Similarly, denote the $2n$ tiles of the snake graph $\SH_n$ by
\begin{align*}
H_{-n}, H_{-(n-1)},\ldots, H_{n-2}, H_{n-1}
\end{align*}
from left to right.
\end{notation}

\begin{defn}[Exponents of tiles] Let $n\geq 0$.
\begin{enumerate}
\item The function $\alpha$ assigns every tile of $\Gn$ the half-integer $\alpha(G_i)=i/2$ if $G_i$ has weight $1$, and the half-integer $\alpha(G_i)=-i/2$ if $G_i$ has weight $2$.
\item The function $\alpha$ assigns every tile of $\SH_n$ the half-integer $\alpha(H_i)=(i-1)/2$ if $H_i$ has weight $1$, and the half-integer $\alpha(H_i)=-i/2$ if $H_i$ has weight $2$. 
\end{enumerate}
\end{defn}

An example is shown in Figure \ref{Figure:Alpha}. For every Kronecker snake graph $\SG$ (e.g. $\SG=\SG_n$ or $\SG=\SH_n$ for some $n\geq 1$) define  a map
\begin{align*}
\alpha\colon\Match(\SG)\to\frac{1}{2}\mathbb{Z},\quad
P\mapsto\sum_{G \in \operatorname{Twist}(P)} \alpha(G).
\end{align*}

\begin{figure}
\begin{center}
\begin{tikzpicture}

\newcommand{\dist}{0.9cm} 
\newcommand{\vdist}{2cm} 


\node (-1down) at (-1*\dist,0) {$\bullet$};
\node (-2down) at (-2*\dist,0) {$\bullet$};
\node (-3down) at (-3*\dist,0) {$\bullet$};
\node (-4down) at (-4*\dist,0) {$\bullet$};
\node (-5down) at (-5*\dist,0) {$\bullet$};
\node (-6down) at (-6*\dist,0) {$\bullet$};
\node (-7down) at (-7*\dist,0) {$\bullet$};
\node (-8down) at (-8*\dist,0) {$\bullet$};

\node (-1up) at (-1*\dist,\dist) {$\bullet$};
\node (-2up) at (-2*\dist,\dist) {$\bullet$};
\node (-3up) at (-3*\dist,\dist) {$\bullet$};
\node (-4up) at (-4*\dist,\dist) {$\bullet$};
\node (-5up) at (-5*\dist,\dist) {$\bullet$};
\node (-6up) at (-6*\dist,\dist) {$\bullet$};
\node (-7up) at (-7*\dist,\dist) {$\bullet$};
\node (-8up) at (-8*\dist,\dist) {$\bullet$};

\path[draw,-,above] (-1*\dist,\dist) edge node {\tiny{$2$}} (-2*\dist,\dist);
\path[draw,-,above] (-2*\dist,\dist) edge node {\tiny{$1$}} (-3*\dist,\dist);
\path[draw,-,above] (-3*\dist,\dist) edge node {\tiny{$2$}} (-4*\dist,\dist);
\path[draw,-,above] (-4*\dist,\dist) edge node {\tiny{$1$}} (-5*\dist,\dist);
\path[draw,-,above] (-5*\dist,\dist) edge node {\tiny{$2$}} (-6*\dist,\dist);
\path[draw,-,above] (-6*\dist,\dist) edge node {\tiny{$1$}} (-7*\dist,\dist);
\path[draw,-,above] (-7*\dist,\dist) edge node {\tiny{$2$}} (-8*\dist,\dist);

\path[draw,-,below] (-1*\dist,0) edge node {\tiny{$2$}} (-2*\dist,0);
\path[draw,-,below] (-2*\dist,0) edge node {\tiny{$1$}} (-3*\dist,0);
\path[draw,-,below] (-3*\dist,0) edge node {\tiny{$2$}} (-4*\dist,0);
\path[draw,-,below] (-4*\dist,0) edge node {\tiny{$1$}} (-5*\dist,0);
\path[draw,-,below] (-5*\dist,0) edge node {\tiny{$2$}} (-6*\dist,0);
\path[draw,-,below] (-6*\dist,0) edge node {\tiny{$1$}} (-7*\dist,0);
\path[draw,-,below] (-7*\dist,0) edge node {\tiny{$2$}} (-8*\dist,0);

\path[draw,-] (-1*\dist,0) edge node {} (-1*\dist,\dist);
\path[draw,-] (-2*\dist,0) edge node {} (-2*\dist,\dist);
\path[draw,-] (-3*\dist,0) edge node {} (-3*\dist,\dist);
\path[draw,-] (-4*\dist,0) edge node {} (-4*\dist,\dist);
\path[draw,-] (-5*\dist,0) edge node {} (-5*\dist,\dist);
\path[draw,-] (-6*\dist,0) edge node {} (-6*\dist,\dist);
\path[draw,-] (-7*\dist,0) edge node {} (-7*\dist,\dist);
\path[draw,-] (-8*\dist,0) edge node {} (-8*\dist,\dist);

\node (-T8) at (-9*\dist,0.5*\dist) {$G$};
\node (-T7) at (-7.5*\dist,0.5*\dist) {$1$};
\node (-T6) at (-6.5*\dist,0.5*\dist) {$2$};
\node (-T5) at (-5.5*\dist,0.5*\dist) {$1$};
\node (-T4) at (-4.5*\dist,0.5*\dist) {$2$};
\node (-T3) at (-3.5*\dist,0.5*\dist) {$1$};
\node (-T2) at (-2.5*\dist,0.5*\dist) {$2$};
\node (-T1) at (-1.5*\dist,0.5*\dist) {$1$};


\node (-T8) at (-9*\dist,-\vdist) {$\alpha(G)$};
\node (-T7) at (-7.5*\dist,-\vdist) {$q^{-3/2}$};
\node (-T6) at (-6.5*\dist,-\vdist) {$q^{1}$};
\node (-T5) at (-5.5*\dist,-\vdist) {$q^{-1/2}$};
\node (-T4) at (-4.5*\dist,-\vdist) {$q^0$};
\node (-T3) at (-3.5*\dist,-\vdist) {$q^{1/2}$};
\node (-T2) at (-2.5*\dist,-\vdist) {$q^{-1}$};
\node (-T1) at (-1.5*\dist,-\vdist) {$q^{3/2}$};


\node (-A7) at (-7.5*\dist,-0.5*\vdist) {$\downmapsto$};
\node (-A6) at (-6.5*\dist,-0.5*\vdist) {$\downmapsto$};
\node (-A5) at (-5.5*\dist,-0.5*\vdist) {$\downmapsto$};
\node (-A4) at (-4.5*\dist,-0.5*\vdist) {$\downmapsto$};
\node (-A3) at (-3.5*\dist,-0.5*\vdist) {$\downmapsto$};
\node (-A2) at (-2.5*\dist,-0.5*\vdist) {$\downmapsto$};
\node (-A1) at (-1.5*\dist,-0.5*\vdist) {$\downmapsto$};

\end{tikzpicture}\hspace{0.8cm}
\begin{tikzpicture}

\newcommand{\dist}{0.9cm} 
\newcommand{\vdist}{2cm} 


\node (-1down) at (-1*\dist,0) {$\bullet$};
\node (-2down) at (-2*\dist,0) {$\bullet$};
\node (-3down) at (-3*\dist,0) {$\bullet$};
\node (-4down) at (-4*\dist,0) {$\bullet$};
\node (-5down) at (-5*\dist,0) {$\bullet$};
\node (-6down) at (-6*\dist,0) {$\bullet$};
\node (-7down) at (-7*\dist,0) {$\bullet$};

\node (-1up) at (-1*\dist,\dist) {$\bullet$};
\node (-2up) at (-2*\dist,\dist) {$\bullet$};
\node (-3up) at (-3*\dist,\dist) {$\bullet$};
\node (-4up) at (-4*\dist,\dist) {$\bullet$};
\node (-5up) at (-5*\dist,\dist) {$\bullet$};
\node (-6up) at (-6*\dist,\dist) {$\bullet$};
\node (-7up) at (-7*\dist,\dist) {$\bullet$};

\path[draw,-,above] (-1*\dist,\dist) edge node {\tiny{$1$}} (-2*\dist,\dist);
\path[draw,-,above] (-2*\dist,\dist) edge node {\tiny{$2$}} (-3*\dist,\dist);
\path[draw,-,above] (-3*\dist,\dist) edge node {\tiny{$1$}} (-4*\dist,\dist);
\path[draw,-,above] (-4*\dist,\dist) edge node {\tiny{$2$}} (-5*\dist,\dist);
\path[draw,-,above] (-5*\dist,\dist) edge node {\tiny{$1$}} (-6*\dist,\dist);
\path[draw,-,above] (-6*\dist,\dist) edge node {\tiny{$2$}} (-7*\dist,\dist);

\path[draw,-,below] (-1*\dist,0) edge node {\tiny{$1$}} (-2*\dist,0);
\path[draw,-,below] (-2*\dist,0) edge node {\tiny{$2$}} (-3*\dist,0);
\path[draw,-,below] (-3*\dist,0) edge node {\tiny{$1$}} (-4*\dist,0);
\path[draw,-,below] (-4*\dist,0) edge node {\tiny{$2$}} (-5*\dist,0);
\path[draw,-,below] (-5*\dist,0) edge node {\tiny{$1$}} (-6*\dist,0);
\path[draw,-,below] (-6*\dist,0) edge node {\tiny{$2$}} (-7*\dist,0);

\path[draw,-] (-1*\dist,0) edge node {} (-1*\dist,\dist);
\path[draw,-] (-2*\dist,0) edge node {} (-2*\dist,\dist);
\path[draw,-] (-3*\dist,0) edge node {} (-3*\dist,\dist);
\path[draw,-] (-4*\dist,0) edge node {} (-4*\dist,\dist);
\path[draw,-] (-5*\dist,0) edge node {} (-5*\dist,\dist);
\path[draw,-] (-6*\dist,0) edge node {} (-6*\dist,\dist);
\path[draw,-] (-7*\dist,0) edge node {} (-7*\dist,\dist);

\node (-T6) at (-6.5*\dist,0.5*\dist) {$1$};
\node (-T5) at (-5.5*\dist,0.5*\dist) {$2$};
\node (-T4) at (-4.5*\dist,0.5*\dist) {$1$};
\node (-T3) at (-3.5*\dist,0.5*\dist) {$2$};
\node (-T2) at (-2.5*\dist,0.5*\dist) {$1$};
\node (-T1) at (-1.5*\dist,0.5*\dist) {$2$};


\node (-T6) at (-6.5*\dist,-\vdist) {$q^{-1}$};
\node (-T5) at (-5.5*\dist,-\vdist) {$q^{1}$};
\node (-T4) at (-4.5*\dist,-\vdist) {$q^0$};
\node (-T3) at (-3.5*\dist,-\vdist) {$q^{0}$};
\node (-T2) at (-2.5*\dist,-\vdist) {$q^{1}$};
\node (-T1) at (-1.5*\dist,-\vdist) {$q^{-1}$};


\node (-A6) at (-6.5*\dist,-0.5*\vdist) {$\downmapsto$};
\node (-A5) at (-5.5*\dist,-0.5*\vdist) {$\downmapsto$};
\node (-A4) at (-4.5*\dist,-0.5*\vdist) {$\downmapsto$};
\node (-A3) at (-3.5*\dist,-0.5*\vdist) {$\downmapsto$};
\node (-A2) at (-2.5*\dist,-0.5*\vdist) {$\downmapsto$};
\node (-A1) at (-1.5*\dist,-0.5*\vdist) {$\downmapsto$};

\end{tikzpicture}
\end{center}

\caption{The map $\alpha$ for $\SG_3$ (left) and $\SH_3$ (right)}
\label{Figure:Alpha}
\end{figure}

\begin{defn}[Expansions from matchings]
\label{Def:Expansion}
For $n\geq 0$ we put
\begin{align*}
r_n=\sum_{P\models \SG_n} q^{\alpha(P)} M[\nu(P)],&&s_n=\sum_{P\models \SH_n} q^{\alpha(P)} M[\nu(P)].
\end{align*}
\end{defn}

Notice that $\alpha(P)=0$ for every perfect matching $P$ of $\SH_1$. Hence the formula for $s_1$ in Definition \ref{Def:Expansion} agrees with the formula for $s_1$ given in Definition \ref{Def:s1}. Note that $s_0=1$ since $\SH_0$ (the snake graph with no tiles consisting of a single vertical unweighted edge) has exactly $1$ perfect matching $P$ with $\nu(P)=0$ and $\alpha(P)=0$.

\begin{lemma}
\label{Lemma:Rec}
The following recursive formulae are true.
\begin{align*}
(a) \quad r_nx_1 = q^{1/2}s_ny_1+r_{n-1}x_2,&&(b)\quad x_2s_n=r_{n-1}+q^{-1/2}M[1,0,1,1]s_{n-1}.
\end{align*}
\end{lemma}

\begin{proof} By definition we have
\begin{align}
\label{Eqn:ProofRecursion}
r_nx_1&=\left(\sum_{P\models \SG_n}q^{\alpha(P)}M[\nu(P)]\right)M[1,0,0,0]=\sum_{P\models \SG_n}q^{\alpha(P)}M[\nu(P)]M[1,0,0,0].
\end{align}
Notice that the minimum perfect matching of $\SG_n$ satisfies $\nu(P_{min})=\left(-n-1,n+2,0,0\right)^T$. 
Here, the first two entries are given by the $g$-vector $(-n-1,n+2)$ of the cluster variable, see equations (\ref{Eqn:gvector}) and (\ref{Eqn:gvectorexplicit}), and the last two entries are zero because $P_{min}$ does not admit twisted tiles with weights $1$ or $2$. Whenever we twist a tile of weight $1$ we loose two edges of weight $2$. Whenever we twist a tile of weight $2$ we gain two edges of weight $1$. Hence every perfect matching satisfies  
\begin{align*}
\nu(P)=\left(-n-1+2y_2(P),n+2-2y_1(P),y_1(P),y_2(P)\right)^T.
\end{align*}
Here we use $y_i(P)$ as a shorthand notation for $\lvert\operatorname{Twist}(P)_i\rvert$ for $i\in\{1,2\}$. From this we can conclude that
\begin{align*}
\frac{1}{2} \nu(P)^T \Lambda \left(\begin{matrix}1\\0\\0\\0\end{matrix}\right) =\frac{1}{2} \nu(P)^T\left(\begin{matrix}
0&0&-1&0\\
0&0&0&-1\\
1&0&0&-2\\
0&1&2&0
\end{matrix}\right)\left(\begin{matrix}1\\0\\0\\0\end{matrix}\right) =\frac{1}{2}y_1(P).
\end{align*}
This term arises as the exponent in the multiplication of the two basis elements in equation (\ref{Eqn:ProofRecursion}). It follows that
\begin{align*}
r_nx_1=\sum_{P\models \SG_n}q^{\alpha(P)+1/2 y_1(P)}M[\nu(P)+(1,0,0,0)^T].
\end{align*}

We split the sum into two parts. Let $e$ be the rightmost vertical edge of $\SG_n$. The map $P\mapsto P'=P\backslash \{e\}$ induces a bijection between perfect matchings of $\SG_n$ that contain $e$ and perfect matchings of the graph $\SH_n$. For such a perfect matching $P$ the rightmost tile of weight $1$ is necessarily twisted. We obtain
\begin{align*}
&\sum_{\substack{P\models \SG_n\\ e\in P}}q^{\alpha(P)+\frac12 y_1(P)}M[\nu(P)+(1,0,0,0)^T]\\
&=\sum_{P'\models \SH_n}q^{\alpha(P'\cup \{e\})+\frac12 \lvert \operatorname{Twist}_1(P'\cup\{e\})\rvert}M[\nu(P)+(1,0,0,0)^T]\\
&=\sum_{P'\models \SH_n}q^{\alpha(P'\cup \{e\})+\frac12 \lvert \operatorname{Twist}_1(P')\rvert+\frac12}M[\nu(P')+(0,0,1,0)^T]\\
&=\left(\sum_{P'\models \SH_n}q^{\alpha(P'\cup \{e\})+\frac12 \lvert \operatorname{Twist}_1(P')\rvert+\frac12 x_1(P')-y_2(P')}M[\nu(P')]\right)M[0,0,1,0]
\end{align*}
when we denote $\nu(P')=(x_1(P'),x_2(P'),y_1(P'),y_2(P'))^T$. We claim that this term is equal to $q^{1/2}s_ny_1$. To prove the claim it is enough to show that the exponent in the term in the last line of the previous equation equals $\alpha(P')+1/2$. The term $ x_1(P')-2y_2(P')$ is invariant under twisting since we gain two edges of weight $1$ whenever we twist a tile of weight $2$. This expression becomes $-n$ for the minimum matching. Hence every perfect matching $P'$ of $\SH_n$ satisfies $ x_1(P')-2y_2(P')=-n$. We have $\alpha(P)-\alpha(P')=-\frac12 y_1(P')+\frac12(n+1)$ because the map $\alpha$ gets shifted by $-1/2$ for every tile with label $1$ when passing from $P$ to $P'$, see Figure \ref{Figure:Alpha}, and $P$ contains the twisted rightmost tile with exponent $(n+1)/2$. This establishes the claim.

Next we investigate the second part of the sum. If $e$ is not contained in a perfect matching $P$ of $\SG_n$, then $P$ contains the rightmost horizontal edges $e_1$ and $e_2$ of $\SG_n$. The map $P\mapsto P''=P\backslash \{e_1,e_2\}$ induces a bijection between perfect matchings of $\SG_n$ that contain $e_1$ and $e_2$ and perfect matchings of the graph $\SG_{n-1}$. For such a perfect matching $P$ the two rightmost tiles with weights $1$ and $2$ are not twisted. We obtain
\begin{align*}
&\sum_{\substack{P\models \SG_n\\ e_1,e_2\in P}}q^{\alpha(P)+\frac12 y_1(P)}M[\nu(P)+(1,0,0,0)^T]\\
&=\sum_{P''\models \SG_{n-1}}q^{\alpha(P''\cup \{e_1,e_2\})+\frac12 \lvert \operatorname{Twist}_1(P''\cup\{e_1,e_2\})\rvert}M[\nu(P)+(1,0,0,0)^T]\\
&=\sum_{P''\models \SG_{n-1}}q^{\alpha(P''\cup \{e_1,e_2\})+\frac12 \lvert \operatorname{Twist}_1(P'')\rvert}M[\nu(P'')+(0,1,0,0)^T]\\
&=\left(\sum_{P''\models \SG_{n-1}}q^{\alpha(P''\cup \{e_1,e_2\})+\frac12 \lvert \operatorname{Twist}_1(P'')\rvert-\frac12 y_2(P'')}M[\nu(P'')]\right)M[0,1,0,0].
\end{align*}
We claim that this term is equal to $r_{n-1}x_2$. We show that the exponent of the term in the last line of the previous equation equals $\alpha(P'')$. By construction $y_2(P)=\lvert \operatorname{Twist}_2(P'')\rvert$. We have $\alpha(P)-\alpha(P'')=-\frac12 \lvert \operatorname{Twist}_1(P'')\vert+\frac12 \lvert \operatorname{Twist}_1(P'')\vert$ because the map $\alpha$ gets shifted by $\pm 1/2$ (depending on the weight) when passing from $P$ to $P''$.

This completes the proof of part (a). Part (b) is shown in a similar way.
\end{proof}

\begin{defn}[Coefficients from perfect matchings]
\label{Def:CoeffsFromMatchings}
Let $p,r,n$ be natural numbers such that $p\leq n+1$ and $r\leq n$. We put
\begin{align*}
\widetilde{c}_{p,r,n}=\sum_{\substack{P\models \SG_n\\y_1(P)=n+1-r\\y_2(P)=p}} q^{\alpha(P)},&&\widetilde{d}_{p,r,n}=\sum_{\substack{P\models \SH_n\\y_1(P)=n-r\\y_2(P)=p}} q^{\alpha(P)}.
\end{align*}
\end{defn}

\begin{rem}
\label{Rem:FormulaRS}
Let $n\geq 0$. A combination of Definitions \ref{Def:Expansion} and \ref{Def:CoeffsFromMatchings}  yields
\begin{align*}
r_n=\sum_{p,r\geq 0} \widetilde{c}_{p,n+1-r,n} M[2p-n-1,2r-n,n+1-r,p],&&s_n=\sum_{p,r\geq 0} \widetilde{d}_{p,n-r,n} M[2p-n,2r-n,n-r,p].
\end{align*}
\end{rem}

\begin{lemma} The following recursions hold for all $p,r,n$.
\label{Lemma:CoefficientRecursions}
\begin{align*}
&(a)\quad q^{\frac{n+1-r}{2}}\widetilde{c}_{p,n+1-r,n}=q^{\frac{n+1}{2}}\widetilde{d}_{p,n-r,n}+q^{\frac{p}{2}}\widetilde{c}_{p,n+1-r,n-1}\\
&(b)\quad q^{-\frac{p}{2}} \widetilde{d}_{p,n-r,n}=\widetilde{c}_{p,n-r,n-1}+q^{-\frac{n-r}{2}}\widetilde{d}_{p-1,n-r-1,n-1}\\
&(c)\quad q^{\frac{n+1-r}{2}} c_{p,r,n}=q^{\frac{n+1}{2}}d_{p,r,n}+q^{\frac{p}{2}}c_{p,n+1-r,n-1}\\
&(d)\quad q^{-\frac{p}{2}}d_{p,r,n}=c_{p,r,n-1}+q^{-\frac{n-r}{2}}d_{p-1,r,n-1}
\end{align*}
\end{lemma}

\begin{proof}
We substitute the expressions from Remark \ref{Rem:FormulaRS} in Lemma \ref{Lemma:Rec} and compare coefficients. Evaluation of Lemma \ref{Lemma:Rec} (a) at $M[2p-n,2r-n,n+1-r,p]$ yields part (a) of this lemma, evaluation of Lemma \ref{Lemma:Rec} (b) at $M[2p-n,2r-n+1,n-r,p]$ yields part (b) of this lemma.

For part (c) we use the quantum Pascal rule to get
\begin{align*}
c_{p,r,n}=\left[n-r\atop p\right]_q\left(q^{\frac{r}{2}}\left[n-p\atop r\right]_q+q^{-\frac{n+1-p-r}{2}}\left[n-p\atop r-1\right]_q\right)=q^{\frac{r}{2}} d_{p,r,n}+q^{-\frac{n+1-p-r}{2}}c_{p,r-1,n-1}.
\end{align*}
Part (d) is proved in the same way as part (c).
\end{proof}

The following theorem is a generalization of Theorem \ref{Thm:SnakeGraphFormula} to the quantum cluster algebra of Kronecker type $\A_q(B,\Lambda)$.

\begin{theorem}
\label{Thm:KroneckerSnakes}
For every $n\geq 0$ the following equality holds true:
\begin{align*}
x_{n+3}&=\sum_{p,r\geq 0} \widetilde{c}_{p,n+1-r,n} M[2p-n-1,2r-n,n+1-r,p]=\sum_{\PM\models\Gn} q^{\alpha(P)} M\left[v(\PM)\right];\\
s_n&=\sum_{p,r\geq 0} \widetilde{d}_{p,n-r,n} M[2p-n,2r-n,n-r,p]=\sum_{P\models \mathcal{H}_n}q^{\alpha(P)}M[\nu(P)].
\end{align*}
Moreover, $s_n\in\A_q(B,\Lambda)$.
\end{theorem}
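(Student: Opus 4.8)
\emph{Plan.} The theorem bundles three statements. In each of the two displayed lines the rightmost equality (identifying the $\widetilde c$- resp. $\widetilde d$-sum with the matching sum $r_n$ resp. $s_n$) is a bookkeeping restatement of Remark~\ref{Rem:FormulaRS}: one groups the perfect matchings appearing in Definition~\ref{Def:Expansion} according to the pair $(\lvert\operatorname{Twist}(P)_1\rvert,\lvert\operatorname{Twist}(P)_2\rvert)$ and uses that, as computed in the proof of Lemma~\ref{Lemma:Rec}, the exponent vector of $P\models\SG_n$ is $\nu(P)=(-n-1+2y_2(P),\,n+2-2y_1(P),\,y_1(P),\,y_2(P))^{T}$ (and analogously for $\SH_n$), so it depends only on $(y_1(P),y_2(P))$. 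What needs proof is therefore (i) the identity $x_{n+3}=r_n$ and (ii) the membership $s_n\in\Aq(B,\Lambda)$.

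For (i) I would use the quantum $F$-polynomial expansion. By Theorem~\ref{Thm:QuantumBinomial} and Definition~\ref{Def:QCoeffs}, $x_{n+3}=\sum_{p,r\geq 0}c_{p,r,n}\,M[2p-n-1,2r-n,n+1-r,p]$, the leading term being the $(p,r)=(0,n+1)$ summand $M[-n-1,n+2,0,0]=x_1^{-n-1}x_2^{n+2}$. Since $(p,r)\mapsto(2p-n-1,2r-n,n+1-r,p)$ is injective, the basis vectors occurring are pairwise distinct, so comparing coefficients with the expansion of $r_n$ above reduces $x_{n+3}=r_n$ to the coupled numerical identities $\widetilde c_{p,n+1-r,n}=c_{p,r,n}$ and $\widetilde d_{p,n-r,n}=d_{p,r,n}$ for all $p,r,n\geq 0$ (both sides vanishing outside the support described in Remark~\ref{Rem:NumberOf Matchings}). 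These two I would prove by a single induction on $n$: the base case $n=0$ is checked directly from the one-tile graph $\SG_0$ (two perfect matchings) and the zero-tile graph $\SH_0$ (one perfect matching), matching the $q^{\alpha}$-sums against $c_{0,0,0}=c_{0,1,0}=1$ and $d_{0,0,0}=1$. For the inductive step at level $n$ I would apply the recursions of Lemma~\ref{Lemma:CoefficientRecursions} in the order dictated by their dependencies: parts (b) and (d), whose right-hand sides involve only level-$(n-1)$ coefficients (equal by induction), have left-hand sides $q^{-p/2}\widetilde d_{p,n-r,n}$ and $q^{-p/2}d_{p,r,n}$, so they force $\widetilde d_{p,n-r,n}=d_{p,r,n}$; then parts (a) and (c) (whose validity also follows directly from the quantum Pascal rule~\eqref{Eqn:QuantumPascal}), whose right-hand sides involve the just-proved level-$n$ identity for $\widetilde d,d$ together with the level-$(n-1)$ identity for $\widetilde c,c$, force $\widetilde c_{p,n+1-r,n}=c_{p,r,n}$.

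For (ii), note $s_0=1\in\Aq(B,\Lambda)$ and, by the remark following Definition~\ref{Def:s1}, $s_1=x_0x_3-q^{1/2}x_1x_2y_2\in\Aq(B,\Lambda)$. I would induct on $n$ using Lemma~\ref{Lemma:Rec} and the equality $r_m=x_{m+3}$ from (i). Lemma~\ref{Lemma:Rec}(b) reads $x_2s_n=x_{n+2}+q^{-1/2}M[1,0,1,1]\,s_{n-1}$, and its right-hand side lies in $\Aq(B,\Lambda)$ by the inductive hypothesis (here $M[1,0,1,1]=q^{3/2}x_1y_1y_2$ is a monomial in the generators). The obstacle — and, I expect, the crux — is that this yields only $x_2s_n\in\Aq(B,\Lambda)$, whereas $s_n$ a priori lies only in the ambient skew-field $\mathcal F$: one must cancel the factor $x_2$ inside the quantum cluster algebra. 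I would handle this by eliminating the elements $r_m$ between parts (a) and (b) of Lemma~\ref{Lemma:Rec} (using the known $q$-commutations of $x_1$ and $x_2$ with the $s_m$) to obtain a quantum Chebyshev-type recursion $s_1s_n=q^{a}s_{n+1}+q^{b}\,y_1y_2\,s_{n-1}$ for $n\geq 1$, with explicit half-integers $a,b$; alternatively, such a relation can be obtained directly from the matching expansions by grafting the two-tile snake graph $\SH_1$ onto $\SH_n$ via snake-graph calculus. From it, $s_{n+1}=q^{-a}(s_1s_n-q^{b}\,y_1y_2\,s_{n-1})\in\Aq(B,\Lambda)$ follows inductively. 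Producing this cancellation-free recursion and pinning down $a,b$ is the delicate point; the rest is routine tracking of exponent vectors and $q$-commutation scalars.
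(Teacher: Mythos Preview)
Your treatment of the main identity $x_{n+3}=r_n$ is exactly the paper's proof: reduce to the coefficient identities $c_{p,r,n}=\widetilde c_{p,n+1-r,n}$ and $d_{p,r,n}=\widetilde d_{p,n-r,n}$, verify the base case $n=0$ by inspection, and for the induction step use Lemma~\ref{Lemma:CoefficientRecursions} in the order (b),(d) then (a),(c). Nothing to change there.

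For the membership $s_n\in\Aq(B,\Lambda)$ you make life harder than necessary. You work from Lemma~\ref{Lemma:Rec}(b), which produces $x_2s_n\in\Aq(B,\Lambda)$, and then correctly identify the difficulty that the cluster variable $x_2$ is not invertible in $\Aq(B,\Lambda)$; your proposed fix is to manufacture a Chebyshev-type recursion in the $s_m$ alone. The paper instead uses Lemma~\ref{Lemma:Rec}(a): once $r_n=x_{n+3}$ and $r_{n-1}=x_{n+2}$ are known, the identity $r_nx_1=q^{1/2}s_ny_1+r_{n-1}x_2$ gives $s_ny_1\in\Aq(B,\Lambda)$, and since the \emph{frozen} variable $y_1$ is invertible in the paper's convention, $s_n\in\Aq(B,\Lambda)$ follows in one line. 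Your route is not wrong (and indeed the paper remarks that one can also argue without inverting frozen variables, via the recursions in Lemmas~\ref{Lemma:Recursion} and \ref{Lemma:Rec}), but the ``delicate point'' you flag evaporates once you swap (b) for (a) and cancel a frozen variable rather than a mutable one.
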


\begin{proof}
We have to show that the equalities $c_{p,r,n}=\widetilde{c}_{p,n+1-r,n}$ and $d_{p,r,n}=\widetilde{d}_{p,n-r,n}$ are both true for all natural numbers $p,r,n$. We prove these statements by induction on $n$.

For the base case suppose that $n=0$. In this case the statement for the coefficients $d$ and $\widetilde{d}$ is true because by construction $d_{0,0,0}=\widetilde{d}_{0,0,0}=1$ is the only non-zero coefficient. For the statement for the coefficients $c$ and $\widetilde{c}$, note that the single tile $G$ in the snake graph $\SG_1$ satisfies $\alpha(G)=0$. Hence $\alpha(P)=0$ for both perfect matchings $P\models \SG_1$. This implies $\widetilde{c}_{0,0,0}=\widetilde{c}_{0,1,0}=1$ and $\widetilde{c}_{p,r,0}=0$ otherwise. On the other hand, using $[0]_q=1$ and $[1]_q=1$ in Definition \ref{Def:QCoeffs} we see that $c_{0,0,0}=c_{0,1,0}=1$ and $c_{p,r,0}=0$ otherwise.

 The induction step follows from Lemma \ref{Lemma:CoefficientRecursions}. If the statements are true for $n-1$, then parts (b) and (d) of the lemma imply the statement for the coefficients $d$ and $\widetilde{d}$, and using this equality together with parts (a) and (c) of the lemma we establish the equality of coefficients $c$ and $\widetilde{c}$. 
 
Lemma \ref{Lemma:Rec} (a) implies $s_ny_1\in \A_q(B,\Lambda)$. Since the frozen variable $y_1$ is invertible we can conclude that $s_n$ lies in $\A_q(B,\Lambda)$.    
\end{proof}

\begin{rem}
Some authors do not invert the frozen variables in the definition of a (quantum) cluster algebra. Using the recursions from Lemmas~\ref{Lemma:Recursion} and \ref{Lemma:Rec}, one can prove by induction that $s_n$ belongs to the quantum cluster algebra without inverted frozen variables.
\end{rem}

\subsection{BPS states}

There is a connection between the elements
\begin{align*}
s_n=\sum_{P\models \mathcal{H}_n}q^{\alpha(P)}M[\nu(P)]
\end{align*}
associated to self-crossing arcs and Bogomol'nyi--Prasad--Sommerfield states (BPS states) in supersymmetric 4-dimensional quantum field theories. C\'{o}rdova--Neitzke, see \cite{CN}, consider a supersymmetric quantum field theory of quiver type whose line defects $W_n$ are parametrized by natural numbers $n$. The authors consider a generating function $F(W_n)$ of BPS states such that the coefficients are counting BPS states of a given electromagnetic central charge and a given angular momentum (in the $x_3$ direction). 

C\'{o}rdova--Neitzke use a Coulomb branch formula, see \cite[Equation (3.22)]{CN}, to compute $F(W_n)$ recursively. The Coulomb branch formula is based on work of Manschot--Pioline--Sen, see \cite{MPS}, which relates the Coulomb branch to moduli spaces of quiver representations. C\'{o}rdova--Neitzke compute $F(W_n)$ explicitly for $n\in\{0,1,2,3,4\}$. A direct comparison with our expansion formula establishes the following proposition. 

\begin{prop} \label{prop:BPS}
For $n\in\{0,1,2,3,4\}$ we have $s_n=F(W_n)$.
\end{prop}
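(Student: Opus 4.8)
The plan is to carry out an explicit finite computation: for each $n \in \{0,1,2,3,4\}$, write down both sides of the claimed identity $s_n = F(W_n)$ and check that they agree coefficient-by-coefficient. By Theorem~\ref{Thm:KroneckerSnakes}, the left-hand side has the closed form
\[
s_n=\sum_{p,r\geq 0} \widetilde{d}_{p,n-r,n}\, M[2p-n,2r-n,n-r,p]=\sum_{p+r\leq n} d_{p,r,n}\, M[2p-n,2r-n,n-r,p],
\]
where $d_{p,r,n}=\left[n-r\atop p\right]_q\left[n-p\atop r\right]_q$ by Definition~\ref{Def:QCoeffs} together with the equality $d_{p,r,n}=\widetilde{d}_{p,n-r,n}$ established in the proof of Theorem~\ref{Thm:KroneckerSnakes}. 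So the first step is simply to expand these sums for $n=0,1,2,3,4$, using $[0]_q=1$, $[1]_q=1$, $[2]_q=q^{1/2}+q^{-1/2}$, $[3]_q=q+1+q^{-1}$, $[4]_q=q^{3/2}+q^{1/2}+q^{-1/2}+q^{-3/2}$, and the corresponding small quantum binomial coefficients.

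The second step is to transcribe C\'ordova--Neitzke's generating functions $F(W_n)$ from \cite[Section~3]{CN}, where they are computed recursively via the Coulomb branch formula \cite[Equation~(3.22)]{CN}, and to match their variables with ours. Concretely, their BPS generating function is a Laurent polynomial in a formal variable tracking angular momentum (which will correspond to our $q^{1/2}$) whose monomials carry electromagnetic charges; under the dictionary $M[a_1,a_2,a_3,a_4]$ with $(a_1,a_2)$ recording the $g$-vector contribution in $x_1,x_2$ and $(a_3,a_4)$ recording the $y_1,y_2$-degree, each term of $F(W_n)$ is identified with a term of our sum. The key bookkeeping point is that our half-integer exponent $\alpha(P)$, packaged into the coefficients $d_{p,r,n}$, must reproduce exactly C\'ordova--Neitzke's angular-momentum grading; this is where the substantive content lies.

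The main obstacle is not any deep argument but rather the translation of conventions between the physics literature and the combinatorial setup here: making sure the normalization of $q^{1/2}$ versus their fugacity, the sign/orientation conventions on central charges, and the overall $q$-power normalization (the theorem only asserts equality up to such bookkeeping in the Stembridge corollary, but here we claim honest equality) are all aligned. Once the dictionary is fixed by the $n=0$ and $n=1$ cases — where $s_0=1=M[0,0,0,0]$ trivially and $s_1=M[-1,1,0,0]+M[-1,-1,1,0]+M[1,-1,1,1]$ is the fully explicit expression from Definition~\ref{Def:s1} — the remaining cases $n=2,3,4$ are a direct verification that the polynomials coincide. I would present the proof as a table or short list of the explicit expansions for $s_2,s_3,s_4$ alongside the corresponding $F(W_n)$ and observe that they are term-by-term identical, remarking that a conceptual explanation (an affirmative answer to Question~\ref{Conj:BPS} for all $n$) would presumably follow from matching the Coulomb branch recursion with our recursion in Lemma~\ref{Lemma:Rec}, but that such a comparison is beyond the scope of this remark.
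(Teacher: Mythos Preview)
Your proposal is correct and matches the paper's approach exactly: the paper gives no argument beyond the single sentence ``A direct comparison with our expansion formula establishes the following proposition,'' so the explicit term-by-term check you describe \emph{is} the intended proof. Your additional remarks on fixing the dictionary via the $n=0,1$ cases and on the prospect of matching recursions for general $n$ are reasonable elaborations, but go beyond what the paper itself provides.
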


\begin{question} \label{Conj:BPS}
Does $s_n=F(W_n)$ hold for all $n\in\mathbb{N}$?
\end{question}

Several authors have studied variations of the question in the classical commutative setup. Gaiotto--Moore--Neitzke \cite[Section 5]{GMN} conjectured that if $q=1$, then $F(W_n)$ is equal to a Fock--Goncharov canonical basis element in a cluster variety. C\'{o}rdova--Neitzke give a second method to calculate $F(W_n)$ using the Higgs branch. Allegretti \cite{A} relates generating functions of BPS states calculated via the Higgs branch to elements in cluster algebras.

\section{The Stembridge phenomenon}

Stembridge's $q=-1$ phenomenon \cite[Section 0]{St} asserts the following: Suppose that we have a finite set $B$ of combinatorial origin together with a natural weight function $w\in B\to\mathbb{Z}$. We consider the Laurent polynomial $X(q)=\sum_{b\in B}q^{w(B)}\in\mathbb{Z}[q^{\pm 1}]$ with $X(1)=\lvert B\rvert$. Moreover suppose that $B$ admits a natural involution $\inv\colon B\to B$. Stembridge's phenomenon asserts that often $X(-1)=\lvert B^{\inv}\rvert$ is equal to the number of fixed points in $B$ under the action of $\inv\colon B\to B$.

We consider the quantum cluster algebra attached to the Kronecker quiver from Section~\ref{Section:Kronecker}. Suppose that $p,r,n$ are natural numbers with $p+r\leq n$. The graph $\Gn$ admits a horizontal and a vertical axis of symmetry. We consider the reflection across the vertical axis of symmetry. The reflection induces an involution $\inv\colon \Match(\Gn)\to \Match(\Gn)$. Note that $\inv$ leaves the height monomial $y(\PM)$ of a perfect matching invariant. By restriction we obtain an involution $\inv\colon\Match(\Gn)_{p,r}\to\Match(\Gn)_{p,r}$. To construct the polynomial $X$ let us introduce the following variation of quantum numbers and quantum binomial coefficients.

\begin{defn}[Gaussian integers and binomial coefficients and evaluations at $q=-1$] Let $k,n$ be natural numbers with $k\leq n$.
\begin{itemize}
\item[(a)] The polynomial $(n)_q=(q^n-1)/(q-1)=1+q+\ldots+q^{n-1}\in\mathbb{Z}[q]$ is called \emph{Gaussian integer}. The \emph{Gaussian factorial} is defined as $(n)_q!=(n)_q(n-1)_q\ldots(1)_q\in\mathbb{Z}[q]$. The \emph{Gaussian binomial coefficient} is defined as $({n\atop k})_q=(n)_q!/[(k)_q!(n-k)_q!]\in\mathbb{Z}[q]$.
\item[(b)] The natural numbers $(n)_{-1}$, $(n)_{-1}!$ and $({n\atop k})_{-1}$ are defined to be the evaluations of the polynomials in part (a) at $q=-1$. 
\end{itemize}
\end{defn}

\begin{rem}
The Gaussian integers and quantum integers agree up to a power of $q^{1/2}$. In particular,
\begin{align*}
&\left[n\right]_q=q^{\frac{1-n}{2}} \frac{q^n-1}{q-1}=(n)_q,\quad [n]_q!=q^{-n(n-1)/4}(n)_q!,\quad\left[{n\atop k}\right]_q=q^{-k(n-k)/2} \left({n\atop k}\right)_q,\\
&c_{p,r,n}=q^{-[r+(n-r-p)(p+r)]/2} \left(n-r\atop p\right)_q\left(n+1-p\atop r\right)_q.
\end{align*}
\end{rem}

Let us describe the evaluation of Gaussian binomial coefficients at $q=-1$.

\begin{lemma}
\label{Lemma:EvaluationOfCoeffs}
For all natural numbers $n,k$ with $k\leq n$ the following equation holds:
\begin{align*}
\binom{n}{k}_{-1}
&=\begin{cases}
0&\textrm{if }k\equiv 1\, (\operatorname{mod} 2)\textrm{ and }n\equiv 0\,(\operatorname{mod} 2);\\
\binom{\lfloor n/2\rfloor}{\lfloor k/2\rfloor}&\textrm{otherwise}.
\end{cases}\\
\end{align*}
\end{lemma}
\begin{proof}
In this proof, all congruences are read modulo $2$. The polynomial $(n)_q\in\mathbb{Z}[q]$ is divisible by $q+1$ if and only if $n$ is even. In this case we may write $(n)_q=(q+1)h_n$ for some polynomial $h_n\in\mathbb{Z}[q]$ with $h_n(-1)=n/2$. If $n$ is odd, then $(n)_{-1}=1$.

Thus, when we write the rational function $({n\atop k})_q$ in lowest terms, the irreducible polynomial $q+1$ occurs with multiplicity $\lfloor n/2\rfloor-\lfloor k/2\rfloor-\lfloor(n-k)/2\rfloor$. This multiplicity is equal to $1$ if $k\equiv 1$ and $n\equiv 0$, and it is $0$ otherwise. In particular, $\left(n\atop k\right)_{-1}=0$ if $(k,n)\equiv (1,0)$. For $(k,n)\not\equiv(1,0)$ $(\operatorname{mod} 2)$ we may evaluate $\left(n\atop k\right)_{q}=(n)_q!/[(k)_q!(n-k)_q!]$ at $q=-1$ by replacing every $(l)_q$ with $1$ for every odd $l$ and every $(l)_q$ with $l/2$ for every even $l$. The desired equality follows.
\end{proof}

The following corollary is a direct consequence of Lemma \ref{Lemma:EvaluationOfCoeffs}.

\begin{cor}
\label{Cor:QuantumCoeffsAtMinus1}
Suppose that $p,r,n$ are natural numbers with $p+r\leq n$. If $(p,r,n)$ is congruent to $(1,0,0)$, $(1,1,1)$, $(0,1,1)$, or $(1,1,0)$ modulo $2$, then
\begin{align*}
\left(n-r\atop p\right)_{-1}\left(n+1-p\atop r\right)_{-1}=0.
\end{align*}
If $(p,r,n)$ is congruent to $(0,0,0)$, $(0,1,0)$, $(0,0,1)$, or $(1,0,1)$ modulo $2$, then
\begin{align*}
\left(n-r\atop p\right)_{-1}\left(n+1-p\atop r\right)_{-1}
=\binom{\lfloor (n-r)/2\rfloor}{\lfloor p/2\rfloor}\binom{\lfloor (n+1-p)/2\rfloor}{\lfloor r/2\rfloor}.
\end{align*}
\end{cor}

\begin{theorem}Assume that $p,r,n$ are natural numbers such that $p+r\leq n$. Then 
\begin{align*}
\left(n-r\atop p\right)_{-1}\left(n+1-p\atop r\right)_{-1}=\lvert\left\lbrace\,\PM\in\Match(\Gn)_{p,r}\mid \inv(\PM)=\PM\,\right\rbrace\rvert.
\end{align*}
\end{theorem}

\begin{proof} We distinguish the following cases:
\begin{itemize}
\item[(1)] Assume that $n$ is even. In this case the central tile of $\Gn$ has weight $1$.
\begin{itemize}
\item[(1.1)] Assume that $p$ is even.
\begin{itemize}
\item[(1.1.1)] Assume that $r$ is even. Then for every $\PM\models_p^r\Gn$ the number of tiles $G\in\operatorname{Twist}(G)_1$ of weight $1$, namely $n+1-r$, is odd. Hence the central tile must be twisted.

A perfect matching $\PM\models_p^r\Gn$ with $\inv(\PM)=\PM$ is uniquely determined by a perfect matching of the full subgraph isomorphic to $\SH_{n/2}$ on the left of the central tile (because this determines the structure of $\PM$ on the right of the central tile). Hence the number of matchings $\PM\models_p^r\Gn$ with $\inv(\PM)=\PM$ is equal to the cardinality of the set $\Match(\SH_{n/2})_{p',r'}$ where $p'=p/2$ and $r'=r/2$. Remark \ref{Rem:NumberOf Matchings} and Lemma \ref{Lemma:EvaluationOfCoeffs} imply that $\lvert\left\lbrace\,\PM\in\Match(\Gn)_{p,r}\mid \inv(\PM)=\PM\,\right\rbrace\rvert$ is equal to 
\begin{align*}
\binom{(n-r)/2}{p/2}\binom{(n-p)/2}{r/2}=\left(n-r\atop p\right)_{-1}\left(n+1-p\atop r\right)_{-1}.
\end{align*}
\item[(1.1.2)] Assume that $r$ is odd. Then for every $\PM\models_p^r\Gn$ the number of tiles $G\in\operatorname{Twist}(G)_1$ of weight $1$, namely $n+1-r$, is even. Hence the central tile cannot be twisted, and locally at the central tile the perfect matching contains the top and the bottom edges.

A perfect matching $\PM\models_p^r\Gn$ with $\inv(\PM)=\PM$ is uniquely determined by a perfect matching of the full subgraph isomorphic to $\SG_{(n-2)/2}$ on the left of the central tile (because this determines the structure of $\PM$ on the right of the central tile). Hence the number of matchings $\PM\models_p^r\Gn$ with $\inv(\PM)=\PM$ is equal to the cardinality of the set $\Match(\SG_{(n-2)/2})_{p',r'}$ where $p'=p/2$ and $r'=(r+1)/2$. Remark \ref{Rem:NumberOf Matchings} and Lemma \ref{Lemma:EvaluationOfCoeffs} imply that $\lvert\left\lbrace\,\PM\in\Match(\Gn)_{p,r}\mid \inv(\PM)=\PM\,\right\rbrace\rvert$ is equal to
\begin{align*}
\binom{(n-r-1)/2}{p/2}\binom{(n-p)/2}{(r-1)/2}=\left(n-r\atop p\right)_{-1}\left(n+1-p\atop r\right)_{-1}.
\end{align*}

\end{itemize}
\item[(1.2)] Assume that $p$ is odd. Any perfect matching $\PM\models_p^r\Gn$ with $\inv(\PM)=\PM$ arises from $\PM_{min}$ by twisting the same number of tiles labelled $2$ on each side of the central tile. Hence $\lvert\left\lbrace\,\PM\in\Match(\Gn)_{p,r}\mid \inv(\PM)=\PM\,\right\rbrace\rvert=0$. The desired equality follows from Corollary \ref{Cor:QuantumCoeffsAtMinus1}. 
\end{itemize}
\item[(2)] The case when $n$ is odd can be dealt with in an analogous way. 
\end{itemize}
\end{proof}

\begin{cor}
\label{Cor:Stembridge}
Let $p,r,n$ be natural numbers with $p+r\leq n$. Put
\begin{align*}
X(q)=\left(n-r\atop p\right)_q\left(n+1-p\atop r\right)_q=q^{[r+(n-r-p)(p+r)]/2} \sum_{\PM\models_p^r\Gn} q^{\alpha(P)}
\end{align*}
and define a map $ w\colon \Match(\Gn)_{p,r}\to\mathbb{N}$ by
\begin{align*}
\PM\mapsto \frac{r+(n-r-p)(p+r)}{2}+\alpha(P).
 \end{align*}
Then the quadruple $(\Match(\Gn)_{p,r},\sigma,X,w)$ satisfies Stembridge's $q=-1$ phenomenon.
\end{cor}

{\scriptsize

}

\end{document}